\newtheorem{theorem}{Theorem}
\theoremstyle{plain}
\newtheorem{corollary}{Corollary}
\newtheorem{definition}{Definition}
\newtheorem{example}{Example}
\newtheorem{lemma}{Lemma}
\newtheorem{proposition}{Proposition}
\newtheorem{remark}{Remark}
\numberwithin{equation}{section}
\begin{document}
\title{ALMOST COSYMPLECTIC STATISTICAL MANIFOLDS}
\author{Aziz Yazla$^{1}$}
\address{ $1-$Uludag University, Science Institute, Gorukle 16059,
Bursa-TURKEY}
\email{501411002@ogr.uludag.edu.tr, cengiz@uludag.edu.tr}
\author{ \.{I}rem K\"{u}peli Erken$^{2}$}
\address{$2-$Faculty of Natural Sciences, Architecture and Engineering,
Department of Mathematics, Bursa Technical University, Bursa-TURKEY}
\email{ irem.erken@btu.edu.tr}
\author{Cengizhan Murathan $^{3}$}
\address{$3-$Art and Science Faculty, Department of Mathematics, Uludag
University, Gorukle 16059, Bursa-TURKEY}
\email{ cengiz@uludag.edu.tr}
\subjclass[2010]{Primary 53B30, 53C15, 53C25; Secondary 53D10}
\keywords{Almost contact manifold, statistical manifold, conjugate
conenction Kaehler statistical manifold, Sasakian statistical manifold,
Kenmotsu statistical manifold.}

\begin{abstract}
This paper is a study of almost contact statistical manifolds. Especially
this study is focused on almost cosymplectic statistical manifolds. We
obtained basic properties of such manifolds. It is proved a characterization
theorem and a corollary for the almost cosymplectic statistical manifold
with Kaehler leaves. We also study curvature properties of an almost
cosymplectic statistical manifold. Moreover, examples are constructed.
\end{abstract}

\maketitle


\section{I\textbf{ntroduction}}

\bigskip Let $(M,g)$ be a Riemannian manifold and $\nabla $ be an affine
connection on $M$. An affine connection $\nabla ^{\ast }$ is called a
conjugate (dual) connection of $\nabla $ if%
\begin{equation}
Zg(X,Y)=g(\nabla _{Z}X,Y)+g(X,\nabla _{Z}^{\ast }Y)  \label{STAT1}
\end{equation}%
for any $X,Y,Z\in \Gamma (M).$ In this situation $\nabla g$ is symmetric.
The triple $(g,\nabla ,\nabla ^{\ast })$ is called a dualistic structure on $%
M$ and the quadruplet $(M,g$,$\nabla $,$\nabla ^{\ast }$) is called
statistical manifold. A. P. Norden introduced these connections to affine
differential geometry and then\ U. Simon gave an excellent survey concerning
the notion of \textquotedblleft conjugate connection" ( \cite{SI}). The
notion of conjugate connection was first initiated into statistics by Amari 
\cite{Amari}. In his studies involves statistical problems getting some
solutions and developed by Lauritzen \cite{LA}. If $\nabla $ coincides with $%
\nabla ^{\ast }$ then statistical manifold simply reduces to usual
Riemannian manifold. Clearly, $(\nabla ^{\ast })^{\ast }$ $=\nabla $. In a
sense, duality is involutive. One can also show that $2\nabla ^{0}=\nabla
+\nabla ^{\ast }$, where $\nabla ^{0}$ is Riemannian connection with respect
to $g$. \ In \cite{KU}, T.Kurose studied affine immersions of statistical
manifolds into the affine space and noticed that there is a close
relationship between the geometry of statistical manifolds and affine
geometry. Otherhand Lagrangian submanifolds of complex space forms are also
naturally endowed with statistical structures (see \cite{SHIMA} pp. 34). So
statistical manifolds play an important role in differential geometry.

Recently, H.Furuhuta \cite{FURUHATA1} defined and studied the holomorphic
statistical manifold which can be considered as a Kaehler manifold with a
certain connection. Then holomorphic statistical manifold notion is expanded
to the statistical counterparts of a Sasakian manifold and a Kenmotsu
manifold by \cite{FURUHATA2}, \cite{FURUHATA3} and \cite{MM}. \ Other hand
K. Takano \cite{TAKANO},\cite{KTAKANO} defined Kaehler-like and Sasaki-like
statistical manifolds which are considered \ setting suitable complex
structures and suitable contact structures on statistical manifolds. These\
studies motivate us to study on almost complex statistical and almost
contact statistical manifolds. Especially our main purpose here is to extend
these results to almost cosymplectic statistical manifolds.

In the present paper, we are interested in almost Hermitian statistical
manifolds and almost contact statistical manifolds which include Kaehler,
Sasakian, Kenmotsu and cosymplectic statistical manifolds. The paper is
organized as follows. In Section 2, we provide a brief the notions of
statistical manifolds and almost contact manifolds. In Section 3, we define
almost Hermitian statistical manifolds and \ study almost Kaehler
statistical manifolds. In Section 4, we introduce almost contact statistical
manifolds and provide a few basic equalities. In Section 5, we study almost
cosymplectic statistical manifolds and give a characterization, almost
Hermitian manifolds and almost contact manifolds. In Section 6, we give also
a characterization of almost cosymplectic statistical manifolds with Kaehler
statistical leaves and provide examples on almost cosymplectic statistical
manifolds. In the last section, we study curvature properties of an almost
cosymplectic statistical manifold.

\section{Preliminaries}

For a statistical manifold $(M,g$,$\nabla $,$\nabla ^{\ast }$) the
difference $(1,2)$ tensor $\mathcal{K}$ of \ a torsion free affine
connection $\nabla $ and Levi-Civita connection $\nabla ^{0}$ is defined as%
\begin{equation}
\mathcal{K}_{X}Y=\mathcal{K}(X,Y)=\nabla _{X}Y-\nabla _{X}^{0}Y.  \label{K1}
\end{equation}%
Because of $\nabla $ and $\nabla ^{0}$ are torsion free, we have%
\begin{equation}
\mathcal{K}_{X}Y=\mathcal{K}_{Y}X,\text{ \ \ }\ g(\mathcal{K}_{X}Y,Z)=g(Y,%
\mathcal{K}_{X}Z)  \label{K2}
\end{equation}%
for any $X,Y,Z\in \Gamma (TM)$. By (\ref{STAT1}) and (\ref{K1}), one can
obtain 
\begin{equation}
\mathcal{K}_{X}Y=\nabla _{X}^{0}Y-\nabla _{X}^{\ast }Y.  \label{K3}
\end{equation}%
Using (\ref{K1}) and (\ref{K3}), we find 
\begin{equation}
2\mathcal{K}_{X}Y=\nabla _{X}Y-\nabla _{X}^{\ast }Y.  \label{K4}
\end{equation}%
By (\ref{K1}), we have 
\begin{equation}
g(\nabla _{X}Y,Z)=g(\mathcal{K}_{X}Y,Z)+g(\nabla _{X}^{0}Y,Z).  \label{K5}
\end{equation}%
An almost Hermitian manifold $(N^{2n},g,J)$ is a smooth manifold endowed
with an almost complex structure $J$ and a Riemannian metric $g$ compatible
in the sense

\begin{equation*}
J^{2}X=-X,\text{ }g(JX,Y)=-g(X,JY)
\end{equation*}%
for any $X,Y\in \Gamma (TN).$ The fundamental $2$-form $\Omega $ of an
almost Hermitian manifold is defined by%
\begin{equation*}
\Omega (X,Y)=g(JX,Y)
\end{equation*}%
for any vector fields $X,Y$ on $N$. An almost Hermitian manifold is called
an almost Kaehler manifold if its fundamental form $\Omega $ is closed, that
is, $d\Omega $ $=0.$ If Nijenhuis torsion of $J$ satisfies%
\begin{equation*}
N_{J}(X,Y)=[X,Y]-[JX,JY]+J[X,JY]+J[JX,Y]=0
\end{equation*}%
then $(N^{2n},g,J)$ is called Kaehler manifold. It is also well known that
an almost Hermitian manifold $(M,J,g)$ is Kaehler if and only if its almost
complex structure $J$ is parallel with respect to the Levi-Civita connection 
$\nabla ^{0}$, that is, $\nabla ^{0}J=0$ (\cite{KO}).

Let $M$ be a $(2n+1)$-dimensional differentiable manifold and $\phi $ is a $%
(1,1)$ tensor field, $\xi $ is a vector field and $\eta $ is a one-form on $%
M.$ If $\phi ^{2}=-Id+\eta \otimes \xi ,\quad \eta (\xi )=1$ then $(\phi
,\xi ,\eta )$ is called an almost contact structure on $M$ . The manifold $M$
is said to be an almost contact manifold if it is endowed with an almost
contact structure \cite{Blair}.

Let $M$ be an almost contact manifold. $M$ will be called an almost contact
metric manifold if it is additionally endowed with a Riemannian metric $g$ ,
i.e.%
\begin{equation}
g(\phi X,\phi Y)=g(X,Y)-\eta (X)\eta (Y).  \label{1}
\end{equation}

For such manifold, we have 
\begin{equation}
\eta (X)=g(X,\xi ),\text{ }\phi (\xi )=0,\text{ }\eta \circ \phi =0.
\label{2}
\end{equation}

Moreover, we can define a skew-symmetric tensor field (a $2$-form) $\Phi $ by%
\begin{equation}
\Phi (X,Y)=g(\phi X,Y),  \label{3}
\end{equation}%
usually called fundamental form.$^{{}}$

~On an almost contact manifold, the $(1,2)$-tensor field $N^{(1)}$ is
defined by%
\begin{equation*}
N^{(1)}(X,Y)=\left[ \phi ,\phi \right] (X,Y)-2d\eta (X,Y)\xi ,
\end{equation*}%
where $\left[ \phi ,\phi \right] $ is the Nijenhuis torsion of $\phi $%
\begin{equation*}
\left[ \phi ,\phi \right] (X,Y)=\phi ^{2}\left[ X,Y\right] +\left[ \phi
X,\phi Y\right] -\phi \left[ \phi X,Y\right] -\phi \left[ X,\phi Y\right] .
\end{equation*}

If $N^{(1)}$ vanishes identically, then the almost contact manifold
(structure) is said to be normal \cite{Blair}. The normality condition says
that the almost complex structure $J$ defined on $M\times 
\mathbb{R}
$%
\begin{equation*}
J(X,\lambda \frac{d}{dt})=(\phi X+\lambda \xi ,\eta (X)\frac{d}{dt}),
\end{equation*}%
is integrable.

An almost contact metric manifold $M^{2n+1}$, with a\ structure $(\phi ,\xi
,\eta ,g)$ is said to be an almost cosymplectic manifold, if 
\begin{equation}
d\eta =0,\quad d\Phi =0.  \label{4}
\end{equation}%
If additionally normality conditon is fulfilled, then manifold is called
cosymplectic.

On the other hand, Kenmotsu studied in \cite{KENMOTSU} another class of
almost contact manifolds, defined by the following conditions on the
associated almost contact structure%
\begin{equation}
d\eta =0,\quad d\Phi =2\eta \wedge \Phi .  \label{5}
\end{equation}%
A normal almost Kenmotsu manifold is said to be a Kenmotsu manifold.

When 
\begin{equation}
d\eta =\Phi   \label{6}
\end{equation}%
an almost contact manifold is called a contact metric manifold \cite{Blair}.
A contact metric manifold $M^{2n+1}$ is a Sasakian manifold if the structure
is normal.

\section{Dualistic structure on almost Hermitian manifolds}

\begin{definition}
Let ($N^{2n},g,\nabla ,\nabla ^{\ast })$ be a statistical manifold. If $%
(N^{2n},g,J)$ $\ $is an almost Hermitian manifold then ($N^{2n},g,J,\nabla
,\nabla ^{\ast })$ is called almost Hermitian statistical manifold. If $%
(N^{2n},g,J)$ $\ $is an (almost) Kaehler manifold then ($N^{2n},g,J,\nabla
,\nabla ^{\ast })$ is called (almost) Kaehler statistical manifold.
\end{definition}

After some calculations one can easily get following.

\begin{lemma}[ \protect\cite{NODA}]
\label{Conjugate J}Let ($N^{2n},g,\nabla ,\nabla ^{\ast })$ be an almost
Hermitian statistical manifold. Then the following equation 
\begin{equation}
g((\nabla _{X}J)Y,Z)=-g(Y,(\nabla _{X}^{\ast }J)Z)  \label{AZIZ1}
\end{equation}%
holds for any $X,Y,Z\in \Gamma (TM).$
\end{lemma}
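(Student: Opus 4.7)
The plan is to expand $(\nabla_X J)Y = \nabla_X(JY) - J(\nabla_X Y)$ inside the pairing and then convert each $\nabla$-term into a $\nabla^{\ast}$-term using the defining identity (\ref{STAT1}), exploiting the $J$-anti-invariance $g(JA,B) = -g(A,JB)$ at suitable points to absorb the extra signs.

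First, I write
\begin{equation*}
g((\nabla_X J)Y,Z) = g(\nabla_X(JY),Z) - g(J(\nabla_X Y),Z).
\end{equation*}
For the first summand, (\ref{STAT1}) applied to the pair $(JY,Z)$ yields
$g(\nabla_X(JY),Z) = Xg(JY,Z) - g(JY,\nabla_X^{\ast}Z)$,
and then $g(JY,\nabla_X^{\ast}Z) = -g(Y,J\nabla_X^{\ast}Z)$. For the second summand, the almost Hermitian compatibility gives $g(J(\nabla_X Y),Z) = -g(\nabla_X Y, JZ)$, and another application of (\ref{STAT1}) to $(Y,JZ)$ produces $g(\nabla_X Y, JZ) = Xg(Y,JZ) - g(Y,\nabla_X^{\ast}(JZ))$.

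Assembling these, the two $X$-derivative terms combine into $X\bigl(g(JY,Z)+g(Y,JZ)\bigr)$, which vanishes identically by the almost Hermitian relation $g(JY,Z) = -g(Y,JZ)$. What remains is
\begin{equation*}
g((\nabla_X J)Y,Z) = g(Y, J\nabla_X^{\ast}Z) - g(Y, \nabla_X^{\ast}(JZ)) = -g(Y,(\nabla_X^{\ast} J)Z),
\end{equation*}
which is precisely (\ref{AZIZ1}).

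No serious obstacle is expected: the proof is essentially a bookkeeping exercise combining (\ref{STAT1}) with the skew-adjointness of $J$ with respect to $g$. The only thing to watch is the sign from $g(JA,B)=-g(A,JB)$ being applied on both sides so that the $X$-derivatives cancel rather than double up; beyond that, the identity is a direct algebraic consequence.
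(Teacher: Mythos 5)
Your computation is correct and is precisely the ``some calculations'' the paper leaves to the reader (citing \cite{NODA}): expand $(\nabla_X J)Y$, apply (\ref{STAT1}) to each term, and use the skew-adjointness $g(JA,B)=-g(A,JB)$ so that the two derivative terms cancel. The sign bookkeeping checks out, so nothing further is needed.
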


Using (\ref{K1}) and (\ref{K2}), we easily find the next result.

\begin{lemma}
\label{Conjugate JJ}Let ($N^{2n},g,\nabla ,\nabla ^{\ast })$ be an almost
Hermitian statistical manifold. Then%
\begin{eqnarray}
(\nabla _{X}J)Y &=&(\nabla _{X}^{0}J)Y+(\mathcal{K}_{X}J)Y,  \label{AZIZ2} \\
(\nabla _{X}^{\ast }J)Y &=&(\nabla _{X}^{0}J)Y-(\mathcal{K}_{X}J)Y.
\label{AZIZ3}
\end{eqnarray}%
for any $X,Y\in \Gamma (TM).$
\end{lemma}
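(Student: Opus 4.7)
The plan is to prove both identities by direct expansion of the covariant derivative of the $(1,1)$-tensor $J$ together with the additive decompositions $\nabla=\nabla^{0}+\mathcal{K}$ and $\nabla^{\ast}=\nabla^{0}-\mathcal{K}$ supplied by (\ref{K1}) and (\ref{K3}).

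To obtain (\ref{AZIZ2}) I would start from
\[
(\nabla_{X}J)Y=\nabla_{X}(JY)-J(\nabla_{X}Y)
\]
and substitute $\nabla_{X}W=\nabla_{X}^{0}W+\mathcal{K}_{X}W$ in each of the two occurrences of $\nabla$. The Levi-Civita pieces combine into $\nabla_{X}^{0}(JY)-J(\nabla_{X}^{0}Y)=(\nabla_{X}^{0}J)Y$, while the remaining terms are precisely $\mathcal{K}_{X}(JY)-J(\mathcal{K}_{X}Y)$, which is the natural action of the pointwise derivation $\mathcal{K}_{X}$ on the endomorphism $J$ and is therefore denoted $(\mathcal{K}_{X}J)Y$. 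For (\ref{AZIZ3}) I would repeat the same manipulation with $\nabla^{\ast}$ in place of $\nabla$, feeding in $\nabla_{X}^{\ast}W=\nabla_{X}^{0}W-\mathcal{K}_{X}W$ from (\ref{K3}); the only difference is the sign flip in front of $\mathcal{K}_{X}$, which accounts for the minus sign in the second formula.

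As a sanity check, one can verify compatibility with Lemma \ref{Conjugate J}. Using (\ref{K2}) together with the skew-adjointness of $J$ one gets $g((\mathcal{K}_{X}J)Y,Z)=g(Y,(\mathcal{K}_{X}J)Z)$, while from $\nabla^{0}g=0$ combined with $J$ skew one gets $g((\nabla_{X}^{0}J)Y,Z)=-g(Y,(\nabla_{X}^{0}J)Z)$; adding the two shows that the decompositions (\ref{AZIZ2}) and (\ref{AZIZ3}) satisfy $g((\nabla_{X}J)Y,Z)=-g(Y,(\nabla_{X}^{\ast}J)Z)$, consistent with Lemma \ref{Conjugate J}. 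There is no genuine obstacle: the whole argument is a one-line formal calculation, and the only point worth flagging is the interpretation of $(\mathcal{K}_{X}J)Y$ as the commutator $\mathcal{K}_{X}\circ J-J\circ\mathcal{K}_{X}$ evaluated at $Y$, which is the standard extension of the derivation $\mathcal{K}_{X}$ from vector fields to tensor fields.
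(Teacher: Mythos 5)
Your proposal is correct and is exactly the computation the paper intends: the paper offers no written proof, merely remarking that the lemma follows ``easily'' from the decompositions of $\nabla$ and $\nabla^{\ast}$ in terms of $\nabla^{0}$ and $\mathcal{K}$, which is precisely your expansion of $(\nabla_{X}J)Y=\nabla_{X}(JY)-J(\nabla_{X}Y)$ using (\ref{K1}) and (\ref{K3}). Your consistency check against Lemma \ref{Conjugate J} is a useful extra that the paper does not include.
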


\begin{lemma}
\label{FUNDAMENTAL FORM 2}For an almost Hermitian statistical manifold we
have%
\begin{equation}
(\nabla _{X}\Omega )(Y,Z)=g((\nabla _{X}J)Y,Z)-2g(\mathcal{K}_{X}JY,Z),
\label{AZIZ4}
\end{equation}%
and%
\begin{equation}
(\nabla _{X}^{\ast }\Omega )(Y,Z)=g((\nabla _{X}^{\ast }J)Y,Z)+2g(\mathcal{K}%
_{X}JY,Z)  \label{AZIZ5}
\end{equation}%
for any $X,Y,Z\in \Gamma (TM).$
\end{lemma}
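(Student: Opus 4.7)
The plan is to compute $(\nabla_X \Omega)(Y,Z)$ directly from the definition of the covariant derivative of a $2$-tensor, using $\Omega(Y,Z)=g(JY,Z)$, and then convert the derivative of $g$ into an expression involving $\nabla^{\ast}$ via the duality relation \eqref{STAT1}. Concretely, I would start with
\[
(\nabla_X\Omega)(Y,Z)=Xg(JY,Z)-g(J\nabla_X Y,Z)-g(JY,\nabla_X Z),
\]
and replace the first term using \eqref{STAT1}: $Xg(JY,Z)=g(\nabla_X JY,Z)+g(JY,\nabla_X^{\ast}Z)$. This turns the right-hand side into $g((\nabla_X J)Y,Z)$ plus the leftover piece $g(JY,\nabla_X^{\ast}Z-\nabla_X Z)$.

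The next step is to rewrite that leftover piece. By \eqref{K4}, $\nabla_X Z-\nabla_X^{\ast}Z=2\mathcal{K}_X Z$, and by the $g$-self-adjointness of $\mathcal{K}_X$ in \eqref{K2}, $g(JY,\mathcal{K}_X Z)=g(\mathcal{K}_X JY,Z)$. Putting this together produces exactly \eqref{AZIZ4}. For \eqref{AZIZ5} I would run the same computation with $\nabla$ replaced by $\nabla^{\ast}$ on the outer derivative: $(\nabla_X^{\ast}\Omega)(Y,Z)=Xg(JY,Z)-g(J\nabla_X^{\ast}Y,Z)-g(JY,\nabla_X^{\ast}Z)$, and apply \eqref{STAT1} in the dual direction, writing $Xg(JY,Z)=g(\nabla_X JY,Z)+g(JY,\nabla_X^{\ast}Z)$, so that the $g(JY,\nabla_X^{\ast}Z)$ term cancels and we are left with $g(\nabla_X JY-J\nabla_X^{\ast}Y,Z)$, which by adding and subtracting $\nabla_X^{\ast}JY$ equals $g((\nabla_X^{\ast}J)Y,Z)+g((\nabla_X-\nabla_X^{\ast})JY,Z)=g((\nabla_X^{\ast}J)Y,Z)+2g(\mathcal{K}_X JY,Z)$, again by \eqref{K4}.

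There is essentially no hard step here; the whole argument is a direct unwinding of definitions together with the three structural identities \eqref{STAT1}, \eqref{K2} and \eqref{K4}. The only point where I would be careful is the sign bookkeeping in the second identity, since the outer derivative is $\nabla^{\ast}$ while the duality relation \eqref{STAT1} is asymmetric in $\nabla$ and $\nabla^{\ast}$. Once one consistently pairs $\nabla$ with its dual through \eqref{STAT1}, the factor $+2$ (versus $-2$ in the first identity) falls out immediately, and the two formulas are in fact dual to one another under the involution $\nabla\leftrightarrow\nabla^{\ast}$, $\mathcal{K}\leftrightarrow -\mathcal{K}$, as suggested by \eqref{K4}. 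Note also that \eqref{AZIZ5} can be obtained as a corollary of \eqref{AZIZ4} by this involution, which gives a consistency check on the signs.
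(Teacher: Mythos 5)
Your proof is correct and follows essentially the same route as the paper: expand $(\nabla_X\Omega)(Y,Z)$ from the definition, substitute $Xg(JY,Z)=g(\nabla_X JY,Z)+g(JY,\nabla_X^{\ast}Z)$ via \eqref{STAT1}, and reduce the leftover term with \eqref{K4} and \eqref{K2}; the paper treats \eqref{AZIZ5} by the same "similar calculation" you carry out explicitly. The duality/involution consistency check at the end is a nice addition but not a different method.
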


\begin{proof}
According to a vector field, the derivative of 2-form $\Phi $ can be written
as 
\begin{equation*}
(\nabla _{X}\Omega )(Y,Z)=X\Omega (Y,Z)-\Omega (\nabla _{X}Y,Z)-\Omega
(Y,\nabla _{X}Z).
\end{equation*}%
By (\ref{STAT1}), (\ref{K1}) and (\ref{K2}), we obtain 
\begin{eqnarray*}
(\nabla _{X}\Omega )(Y,Z) &=&Xg(JY,Z)-g(J\nabla _{X}Y,Z)-g(JY,\nabla _{X}Z)
\\
&=&g(\nabla _{X}JY,Z)+g(JY,\nabla _{X}^{\ast }Z)-g(J\nabla
_{X}Y,Z)-g(JY,\nabla _{X}Z) \\
&=&g((\nabla _{X}J)Y,Z)-2g(\mathcal{K}_{X}JY,Z).
\end{eqnarray*}%
This leads to (\ref{AZIZ4}). If we similarly calculate the derivative of
2-form $\Omega $ respect to conjugate connection $\nabla ^{\ast }$, we get (%
\ref{AZIZ5})$.$
\end{proof}

By Lemma \ref{Conjugate JJ} and the relations (\ref{K1}) and (\ref{K2}) we
easily prove following corollary.

\begin{corollary}
\label{FUNDAMENTAL FORM ULUDAG}For an almost Hermitian statistical manifold
we have 
\begin{equation}
(\nabla _{X}\Omega )(Y,Z)=(\nabla _{X}^{0}\Omega )(Y,Z)-g(\mathcal{K}_{X}JY+J%
\mathcal{K}_{X}Y,Z)  \label{AZIZ5A}
\end{equation}%
and%
\begin{equation}
(\nabla _{X}^{\ast }\Omega )(Y,Z)=(\nabla _{X}^{0}\Omega )(Y,Z)+g(\mathcal{K}%
_{X}JY+J\mathcal{K}_{X}Y,Z)  \label{AZIZ5B}
\end{equation}%
for any $X,Y,Z\in \Gamma (TM).$
\end{corollary}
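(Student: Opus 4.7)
The plan is a straightforward substitution combining Lemma \ref{FUNDAMENTAL FORM 2} with Lemma \ref{Conjugate JJ}, together with the elementary observation that because $\nabla^{0}$ is the Levi-Civita connection one has $(\nabla^{0}_{X}\Omega)(Y,Z)=g((\nabla^{0}_{X}J)Y,Z)$. This last identity is immediate from $\nabla^{0}g=0$ applied to $(\nabla^{0}_{X}\Omega)(Y,Z)=Xg(JY,Z)-g(J\nabla^{0}_{X}Y,Z)-g(JY,\nabla^{0}_{X}Z)$, and will be used to convert the $\nabla^{0}$-derivative of $J$ into the $\nabla^{0}$-derivative of $\Omega$ at the end.

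First I would expand $(\mathcal{K}_{X}J)Y=\mathcal{K}_{X}(JY)-J(\mathcal{K}_{X}Y)$; this is just the standard formula for the covariant derivative of the endomorphism $J$ by the $(1,1)$-tensor $\mathcal{K}_{X}$, and can equally well be derived as the difference $(\nabla_{X}J)Y-(\nabla^{0}_{X}J)Y$ by direct computation from (\ref{K1}). Substituting $(\nabla_{X}J)Y=(\nabla^{0}_{X}J)Y+(\mathcal{K}_{X}J)Y$ from (\ref{AZIZ2}) into (\ref{AZIZ4}) then yields
\begin{equation*}
(\nabla_{X}\Omega)(Y,Z)=g((\nabla^{0}_{X}J)Y,Z)+g(\mathcal{K}_{X}JY,Z)-g(J\mathcal{K}_{X}Y,Z)-2g(\mathcal{K}_{X}JY,Z),
\end{equation*}
and the last three summands collapse to $-g(\mathcal{K}_{X}JY+J\mathcal{K}_{X}Y,Z)$, giving (\ref{AZIZ5A}).

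For the conjugate identity I would repeat the same manipulation, starting from (\ref{AZIZ5}) and substituting $(\nabla^{\ast}_{X}J)Y=(\nabla^{0}_{X}J)Y-(\mathcal{K}_{X}J)Y$ from (\ref{AZIZ3}); the sign flip in front of $(\mathcal{K}_{X}J)Y$ together with the $+2g(\mathcal{K}_{X}JY,Z)$ correction term produces $+g(\mathcal{K}_{X}JY+J\mathcal{K}_{X}Y,Z)$, which is (\ref{AZIZ5B}). There is no genuine obstacle here, since the entire derivation is algebraic substitution; the only care needed is the bookkeeping of signs and correctly expanding $(\mathcal{K}_{X}J)Y$ so that the two terms in it pair with the $\mp 2g(\mathcal{K}_{X}JY,Z)$ inherited from Lemma \ref{FUNDAMENTAL FORM 2} to leave exactly one copy of each of $g(\mathcal{K}_{X}JY,Z)$ and $g(J\mathcal{K}_{X}Y,Z)$ with the correct sign.
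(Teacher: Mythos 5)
Your proposal is correct and follows essentially the same route the paper intends: the paper leaves the verification as an "easy" consequence of Lemma \ref{Conjugate JJ} together with (\ref{K1})--(\ref{K2}), and your explicit substitution of (\ref{AZIZ2})--(\ref{AZIZ3}) into (\ref{AZIZ4})--(\ref{AZIZ5}), expanding $(\mathcal{K}_{X}J)Y=\mathcal{K}_{X}JY-J\mathcal{K}_{X}Y$ and using $(\nabla_{X}^{0}\Omega)(Y,Z)=g((\nabla_{X}^{0}J)Y,Z)$, is exactly that computation carried out with the signs handled correctly.
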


\begin{theorem}
\label{FUNDAMENTAL FORM 3}Let ($N^{2n},g,,J,\nabla ,\nabla ^{\ast })$ be an
almost Hermitian statistical manifold. The covariant derivatives $\nabla
J,\nabla ^{\ast }J$ of $J$ with respect to the torsion free connections $%
\nabla $ and $\nabla ^{\ast }$ are given by%
\begin{eqnarray}
2g((\nabla _{X}J)Y,Z) &=&2g((\mathcal{K}_{X}J)Y,Z)  \label{AZIZ6} \\
&&+3d\Omega (X,Y,Z)-3d\Omega (X,JY,JZ)+g(N_{J}(Y,Z),JX),  \notag \\
2g((\nabla _{X}^{\ast }J)Y,Z) &=&-2g((\mathcal{K}_{X}J)Y,Z)  \label{AZIZY7}
\\
&&+3d\Omega (X,Y,Z)-3d\Omega (X,JY,JZ)+g(N_{J}(Y,Z),JX)  \notag
\end{eqnarray}%
for any $X,Y,Z\in \Gamma (TM).$
\end{theorem}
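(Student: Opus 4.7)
The plan is to reduce both identities to the classical formula
\[
2g((\nabla^{0}_{X}J)Y,Z)=3d\Omega(X,Y,Z)-3d\Omega(X,JY,JZ)+g(N_{J}(Y,Z),JX),
\]
which expresses the covariant derivative of $J$ under the Levi-Civita connection in terms of the fundamental form $\Omega$ and the Nijenhuis torsion $N_{J}$ (a standard identity for almost Hermitian manifolds, see e.g.\ Kobayashi-Nomizu). Once this is available, the statistical analogues follow at once from Lemma \ref{Conjugate JJ}: adding $2g((\mathcal{K}_{X}J)Y,Z)$ to both sides gives \eqref{AZIZ6}, and subtracting it gives \eqref{AZIZY7}.

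To derive the Levi-Civita formula in a self-contained way, I would expand $(\nabla^{0}_{X}J)Y$ via the Koszul-type expression for $\nabla^{0}\Omega$, using that $g$ is $\nabla^{0}$-parallel and $\Omega(X,Y)=g(JX,Y)$. Concretely, I would start from
\[
2g((\nabla^{0}_{X}J)Y,Z)=(\nabla^{0}_{X}\Omega)(Y,Z)-(\nabla^{0}_{X}\Omega)(JY,JZ)+g(Y,(\nabla^{0}_{X}J)JZ)+\text{cyclic terms},
\]
and then rewrite each $\nabla^{0}\Omega$-term using the global formula
\[
3d\Omega(X,Y,Z)=(\nabla^{0}_{X}\Omega)(Y,Z)+(\nabla^{0}_{Y}\Omega)(Z,X)+(\nabla^{0}_{Z}\Omega)(X,Y),
\]
together with the identity relating the skew-symmetrisation of $\nabla^{0}J$ on $(JY,JZ)$-arguments to $N_{J}(Y,Z)$ (which in turn follows from $[\phi,\phi]$ being expressible via $\nabla^{0}J$ since $\nabla^{0}$ is torsion-free).

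Then, passing to the statistical setting, Lemma \ref{Conjugate JJ} gives
\[
(\nabla_{X}J)Y-(\nabla^{0}_{X}J)Y=(\mathcal{K}_{X}J)Y,\qquad (\nabla^{*}_{X}J)Y-(\nabla^{0}_{X}J)Y=-(\mathcal{K}_{X}J)Y,
\]
so the two claimed identities \eqref{AZIZ6}, \eqref{AZIZY7} are immediate. The genuine work is entirely in the classical Hermitian calculation; the main obstacle is careful bookkeeping of the six terms produced by the cyclic expansion of $d\Omega(X,Y,Z)-d\Omega(X,JY,JZ)$ and matching the leftover piece with $g(N_{J}(Y,Z),JX)$. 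Once that identification is made, the passage from $\nabla^{0}$ to $\nabla$ and $\nabla^{*}$ is purely algebraic via the difference tensor $\mathcal{K}$.
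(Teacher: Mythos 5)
Your argument is correct and is essentially the same as the paper's: the authors likewise quote the classical identity $2g((\nabla^{0}_{X}J)Y,Z)=3d\Omega(X,Y,Z)-3d\Omega(X,JY,JZ)+g(N_{J}(Y,Z),JX)$ as well known and then substitute the relations \eqref{AZIZ2} and \eqref{AZIZ3} from Lemma \ref{Conjugate JJ}. Your additional sketch of how to rederive the Levi-Civita formula is not needed, since the paper simply cites it.
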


\begin{proof}
It is well known that the covariant derivative $\nabla ^{0}J$ satisfies 
\begin{equation}
2g((\nabla _{X}^{0}J)Y,Z)=3d\Omega (X,Y,Z)-3d\Omega
(X,JY,JZ)+g(N_{J}(Y,Z),JX),  \notag
\end{equation}%
for any $X,Y,Z\in \Gamma (TM).$ If we notice the relations (\ref{AZIZ2}) and
(\ref{AZIZ3}) we reach to our equations.
\end{proof}

\begin{corollary}
\label{KAEHLER}Let ($N^{2n},g,J,\nabla ,\nabla ^{\ast })$ be an almost
Kaehler statistical manifold. Then%
\begin{eqnarray}
2g((\nabla _{X}J)Y,Z) &=&2g((\mathcal{K}_{X}J)Y,Z)+g(N_{J}(Y,Z),JX)
\label{AZIZ8} \\
2g((\nabla _{X}^{\ast }J)Y,Z) &=&-2g((\mathcal{K}_{X}J)Y,Z)+g(N_{J}(Y,Z),JX)
\label{AZIZ9}
\end{eqnarray}%
\textit{for any} $X,Y,Z\in \Gamma (TM).$
\end{corollary}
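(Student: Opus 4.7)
The plan is to derive both identities as immediate specializations of Theorem~\ref{FUNDAMENTAL FORM 3} once the almost Kaehler hypothesis is invoked. Recall from Section~2 that an almost Hermitian manifold is almost Kaehler precisely when the fundamental $2$-form $\Omega$ is closed, i.e.\ $d\Omega=0$. Thus, for any vector fields $X,Y,Z\in\Gamma(TM)$, both $d\Omega(X,Y,Z)$ and $d\Omega(X,JY,JZ)$ vanish identically, and no finer structural information about $J$ or the connections is required.

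First I would restate the two formulas \eqref{AZIZ6} and \eqref{AZIZY7} of Theorem~\ref{FUNDAMENTAL FORM 3}:
\begin{eqnarray*}
2g((\nabla_{X}J)Y,Z) &=& 2g((\mathcal{K}_{X}J)Y,Z)+3d\Omega(X,Y,Z)-3d\Omega(X,JY,JZ)+g(N_{J}(Y,Z),JX),\\
2g((\nabla_{X}^{\ast}J)Y,Z) &=& -2g((\mathcal{K}_{X}J)Y,Z)+3d\Omega(X,Y,Z)-3d\Omega(X,JY,JZ)+g(N_{J}(Y,Z),JX).
\end{eqnarray*}
Substituting $d\Omega=0$ into both displays eliminates the two middle terms on each right-hand side, yielding exactly \eqref{AZIZ8} and \eqref{AZIZ9}.

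There is essentially no obstacle here: the corollary is a direct specialization. The only thing worth remarking on is that the derivation does \emph{not} require the full Kaehler condition $\nabla^{0}J=0$; closedness of $\Omega$ alone suffices, which is the content of the almost Kaehler hypothesis, and the Nijenhuis term $N_{J}$ is retained in both equations because it need not vanish in the merely almost Kaehler setting.
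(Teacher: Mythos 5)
Your proposal is correct and is exactly the derivation the paper intends: Corollary~\ref{KAEHLER} is stated as an immediate consequence of Theorem~\ref{FUNDAMENTAL FORM 3}, obtained by setting $d\Omega=0$ in \eqref{AZIZ6} and \eqref{AZIZY7}. Your remark that only closedness of $\Omega$ (not $\nabla^{0}J=0$) is needed, and that the Nijenhuis term must be retained, is accurate.
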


By (\ref{K2}) we can give following.

\begin{proposition}
\label{FUNDAMENTAL FORM 86}Let $(M^{n},g,\nabla ,\nabla ^{\ast })$ be a
statistical manifold and $\psi $ be a skew symmetric $(1,1)$ tensor field on 
$M$. \ Then we have%
\begin{equation}
g(\mathcal{K}_{X}\psi Y+\psi \mathcal{K}_{X}Y,Z)+g(\mathcal{K}_{Z}\psi
X+\psi \mathcal{K}_{Z}X,Y)+g(\mathcal{K}_{Y}\psi Z+\psi \mathcal{K}_{Y}Z,X)=0
\label{AZIZ IDENTITIY}
\end{equation}%
for any $X,Y,Z\in \Gamma (TM).$
\end{proposition}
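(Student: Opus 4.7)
The plan is to exploit the fact that the two identities in \eqref{K2}, taken together, say that the trilinear form $\kappa(X,Y,Z):=g(\mathcal{K}_{X}Y,Z)$ is \emph{totally symmetric} in its three arguments. Once this is in hand, together with the skew symmetry $g(\psi A,B)=-g(A,\psi B)$, the identity \eqref{AZIZ IDENTITIY} reduces to pure bookkeeping.

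First I would expand the left-hand side of \eqref{AZIZ IDENTITIY} into the six terms
\begin{equation*}
T_{1}=g(\mathcal{K}_{X}\psi Y,Z),\ T_{2}=g(\psi\mathcal{K}_{X}Y,Z),\ T_{3}=g(\mathcal{K}_{Z}\psi X,Y),
\end{equation*}
\begin{equation*}
T_{4}=g(\psi\mathcal{K}_{Z}X,Y),\ T_{5}=g(\mathcal{K}_{Y}\psi Z,X),\ T_{6}=g(\psi\mathcal{K}_{Y}Z,X).
\end{equation*}
For the three terms with $\psi$ in the second slot I would use skew symmetry of $\psi$ to push it onto the third slot of $g$, obtaining $T_{2}=-\kappa(X,Y,\psi Z)$, $T_{4}=-\kappa(Z,X,\psi Y)$, $T_{6}=-\kappa(Y,Z,\psi X)$, while the remaining three read $T_{1}=\kappa(X,\psi Y,Z)$, $T_{3}=\kappa(Z,\psi X,Y)$, $T_{5}=\kappa(Y,\psi Z,X)$.

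Next, invoking the total symmetry of $\kappa$ I would permute the arguments so that the same vector sits in the same slot in paired terms: $T_{1}=\kappa(\psi Y,X,Z)$ and $T_{4}=-\kappa(\psi Y,X,Z)$, hence $T_{1}+T_{4}=0$; analogously $T_{2}+T_{5}=0$ (both equal $\pm\kappa(X,Y,\psi Z)$) and $T_{3}+T_{6}=0$ (both equal $\pm\kappa(Z,\psi X,Y)$). Adding the three cancellations yields the required identity.

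There is no real obstacle here; the only thing to get right is the combinatorial matching of the six rewritten terms into three cancelling pairs, which is entirely mechanical once one records the total symmetry of $\kappa$ implied by \eqref{K2} and the skew symmetry of $\psi$.
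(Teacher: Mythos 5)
Your proof is correct and is exactly the argument the paper intends: the paper offers no written proof beyond the remark ``By (\ref{K2}) we can give following,'' i.e.\ it relies on precisely the total symmetry of $g(\mathcal{K}_{X}Y,Z)$ encoded in (\ref{K2}) together with the skew symmetry of $\psi$, which is what you spell out. The pairing $T_{1}+T_{4}=T_{2}+T_{5}=T_{3}+T_{6}=0$ checks out term by term.
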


\begin{corollary}
\label{FUNDAMENTAL FORM BURSA}Let ($N^{2n},g,J,\nabla ,\nabla ^{\ast })$ be
an almost Kaehler statistical manifold. Then%
\begin{eqnarray}
(\nabla _{X}\Omega )(Y,Z)+(\nabla _{Z}\Omega )(X,Y)+(\nabla _{Y}\Omega
)(Z,X) &=&0,  \label{AZIZ81} \\
(\nabla _{X}^{\ast }\Omega )(Y,Z)+(\nabla _{Z}^{\ast }\Omega )(X,Y)+(\nabla
_{Y}^{\ast }\Omega )(Z,X) &=&0  \label{AZIZ82}
\end{eqnarray}%
for any $X,Y,Z\in \Gamma (TM)$ .
\end{corollary}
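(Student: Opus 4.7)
The plan is to reduce everything to two facts we already have on the table: the decomposition of $\nabla\Omega$ and $\nabla^{\ast}\Omega$ via the Levi-Civita piece plus a $\mathcal{K}$-piece, and the cyclic identity from Proposition~\ref{FUNDAMENTAL FORM 86}. First, I would write down the two expressions from Corollary~\ref{FUNDAMENTAL FORM ULUDAG}, namely
\begin{equation*}
(\nabla _{X}\Omega )(Y,Z)=(\nabla _{X}^{0}\Omega )(Y,Z)-g(\mathcal{K}_{X}JY+J\mathcal{K}_{X}Y,Z),
\end{equation*}
together with its conjugate, and sum them cyclically over $(X,Y,Z)$.

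The right-hand side then splits into two cyclic sums. For the Levi-Civita piece I would invoke the almost-Kaehler hypothesis $d\Omega=0$; since $\nabla^{0}$ is torsion free, the standard identity $d\Omega(X,Y,Z)=\mathfrak{S}(\nabla^{0}_{X}\Omega)(Y,Z)$ (cyclic sum) forces
\begin{equation*}
(\nabla _{X}^{0}\Omega )(Y,Z)+(\nabla _{Y}^{0}\Omega )(Z,X)+(\nabla _{Z}^{0}\Omega )(X,Y)=0.
\end{equation*}
For the $\mathcal{K}$-piece I would observe that $J$ is skew-symmetric with respect to $g$, so Proposition~\ref{FUNDAMENTAL FORM 86} applies with $\psi=J$, yielding exactly
\begin{equation*}
g(\mathcal{K}_{X}JY+J\mathcal{K}_{X}Y,Z)+g(\mathcal{K}_{Y}JZ+J\mathcal{K}_{Y}Z,X)+g(\mathcal{K}_{Z}JX+J\mathcal{K}_{Z}X,Y)=0.
\end{equation*}
Adding the two vanishing cyclic sums gives (\ref{AZIZ81}); the conjugate identity (\ref{AZIZ82}) follows in exactly the same way, since the sign of the $\mathcal{K}$-term flips but the cyclic sum is still zero.

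There is really no hard step here: the only thing to double-check is that Proposition~\ref{FUNDAMENTAL FORM 86} is stated for a general skew-symmetric $(1,1)$ tensor and that $J$ satisfies this skew-symmetry in the metric sense $g(JX,Y)=-g(X,JY)$, which is part of the almost Hermitian data. Everything else is bookkeeping of the decomposition $\nabla=\nabla^{0}+\mathcal{K}$, $\nabla^{\ast}=\nabla^{0}-\mathcal{K}$.
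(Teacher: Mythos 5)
Your proof is correct and follows essentially the same route as the paper: decompose $\nabla\Omega$ and $\nabla^{\ast}\Omega$ via Corollary~\ref{FUNDAMENTAL FORM ULUDAG}, kill the Levi-Civita cyclic sum using $d\Omega=0$, and kill the $\mathcal{K}$-cyclic sum by applying Proposition~\ref{FUNDAMENTAL FORM 86} with $\psi=J$. No discrepancy to report.
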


\begin{proof}
Since $d\Omega =0$ we have%
\begin{equation}
(\nabla _{X}^{0}\Omega )(Y,Z)+(\nabla _{Z}^{0}\Omega )(X,Y)+(\nabla
_{Y}^{0}\Omega )(Z,X)=0.  \label{AZIZ83}
\end{equation}
Using (\ref{AZIZ2}), Corollary \ref{FUNDAMENTAL FORM ULUDAG} and Proposition %
\ref{FUNDAMENTAL FORM 86} we have requested equations.
\end{proof}

\begin{corollary}
\label{KAEHLER2}Let ($N^{2n},g,J,\nabla ,\nabla ^{\ast })$ be a Kaehler
statistical manifold. Then%
\begin{eqnarray}
g((\nabla _{X}J)Y,Z) &=&g((\mathcal{K}_{X}J)Y,Z)  \label{AZIZ10} \\
g((\nabla _{X}^{\ast }J)Y,Z) &=&-g((\mathcal{K}_{X}J)Y,Z)  \label{AZIZ11}
\end{eqnarray}%
\textit{for any} $X,Y,Z\in \Gamma (TM).$
\end{corollary}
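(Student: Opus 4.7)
The plan is to derive the corollary as an immediate specialization of either Theorem \ref{FUNDAMENTAL FORM 3} or Corollary \ref{KAEHLER}. The key observation is that a Kaehler manifold is in particular an almost Kaehler manifold for which, in addition, the Nijenhuis torsion of $J$ vanishes identically. Indeed, $\nabla^{0}J=0$ forces both $d\Omega=0$ (so the almost Kaehler hypothesis is satisfied) and $N_{J}=0$.

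First I would invoke Corollary \ref{KAEHLER}, which gives
\begin{eqnarray*}
2g((\nabla _{X}J)Y,Z) &=&2g((\mathcal{K}_{X}J)Y,Z)+g(N_{J}(Y,Z),JX), \\
2g((\nabla _{X}^{\ast }J)Y,Z) &=&-2g((\mathcal{K}_{X}J)Y,Z)+g(N_{J}(Y,Z),JX),
\end{eqnarray*}
since a Kaehler statistical manifold is in particular an almost Kaehler statistical manifold. Next, I would apply the normality condition: for a Kaehler manifold $N_{J}\equiv 0$. Substituting $N_{J}=0$ into both displayed identities and dividing by $2$ yields the equations (\ref{AZIZ10}) and (\ref{AZIZ11}).

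Alternatively, one could work directly from Theorem \ref{FUNDAMENTAL FORM 3}: the Kaehler assumption kills both the $d\Omega$ and the $N_{J}$ contributions on the right-hand side, leaving precisely $\pm 2g((\mathcal{K}_{X}J)Y,Z)$. Either route is essentially a one-line verification; there is no genuine obstacle, the only thing to be careful about is citing the background fact that $\nabla^{0}J=0$ implies $N_{J}=0$, which is standard from the theory of almost Hermitian manifolds recalled in Section 2.
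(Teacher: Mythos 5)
Your proposal is correct and follows the route the paper intends: Corollary \ref{KAEHLER2} is stated without proof precisely because it is the specialization of Theorem \ref{FUNDAMENTAL FORM 3} (equivalently of Corollary \ref{KAEHLER}) to the Kaehler case, where $d\Omega=0$ and $N_{J}=0$ kill the extra terms. Your remark that the Kaehler condition $\nabla^{0}J=0$ entails both $d\Omega=0$ and $N_{J}=0$ is exactly the background fact recalled in Section 2, so nothing is missing.
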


\begin{definition}[\protect\cite{FURUHATA1},\protect\cite{FURUHATA2}]
Let $(M^{n},g,\nabla ,\nabla ^{\ast })$ be statistical manifold. A $2$-form $%
\omega $ on $M^{n}$ is defined by%
\begin{equation}
\omega (X,Y)=g(\psi X,Y)  \label{AZIZ12}
\end{equation}%
where $\psi $ is skew symmetric $(1,1)$ tensor field on $M.$ If $\mathcal{K}%
_{X}\psi Y+\psi \mathcal{K}_{X}Y=0$ \textit{for any} $X,Y\in \Gamma (TM).$%
then $(M^{n},g,\nabla ,\nabla ^{\ast })$ is called holomorphic statistical
manifold.
\end{definition}

From Lemma \ref{FUNDAMENTAL FORM 2} and Corollary \ref{KAEHLER2} we have
following.

\begin{corollary}[\protect\cite{FURUHATA}]
\label{KAEHLER3}Let $(N^{2n},g,J,\nabla ,\nabla ^{\ast })$ be a Kaehler
statistical manifold. Then$\ $\ the following three equations are equivalent:

1) $\nabla \Omega =0,$

2) $N^{2n}$holomorphic statistical manifold,

3) $\nabla ^{\ast }\Omega =0.$
\end{corollary}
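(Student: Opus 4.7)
The plan is to obtain explicit expressions for $(\nabla_X \Omega)(Y,Z)$ and $(\nabla_X^\ast \Omega)(Y,Z)$ in terms of the difference tensor $\mathcal{K}$, specialized to the Kaehler setting, and then read off the three-way equivalence directly from those expressions.

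First I would substitute the Kaehler identity from Corollary \ref{KAEHLER2},
\begin{equation*}
g((\nabla_X J)Y, Z) = g((\mathcal{K}_X J)Y, Z) = g(\mathcal{K}_X JY - J\mathcal{K}_X Y, Z),
\end{equation*}
into the formula from Lemma \ref{FUNDAMENTAL FORM 2}:
\begin{equation*}
(\nabla_X \Omega)(Y,Z) = g((\nabla_X J)Y, Z) - 2g(\mathcal{K}_X JY, Z).
\end{equation*}
A single line of cancellation then gives
\begin{equation*}
(\nabla_X \Omega)(Y,Z) = -g\bigl(\mathcal{K}_X JY + J\mathcal{K}_X Y,\, Z\bigr).
\end{equation*}
The analogous substitution using (\ref{AZIZ11}) into (\ref{AZIZ5}) yields
\begin{equation*}
(\nabla_X^\ast \Omega)(Y,Z) = g\bigl(\mathcal{K}_X JY + J\mathcal{K}_X Y,\, Z\bigr),
\end{equation*}
so the two covariant derivatives of $\Omega$ differ only by a sign.

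From these two identities the three-way equivalence is immediate. Since $Z$ ranges over all of $\Gamma(TN^{2n})$ and $g$ is nondegenerate, the vanishing of either $\nabla \Omega$ or $\nabla^\ast \Omega$ is equivalent to the tensorial identity $\mathcal{K}_X JY + J\mathcal{K}_X Y = 0$ for all $X,Y \in \Gamma(TN^{2n})$. Taking $\psi = J$ (which is skew-symmetric in the sense that $g(JX,Y) = -g(X,JY)$), this condition is exactly the definition of a holomorphic statistical manifold with associated $2$-form $\omega = \Omega$. Hence $(1) \Leftrightarrow (2) \Leftrightarrow (3)$.

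There is no real obstacle here: once Lemma \ref{FUNDAMENTAL FORM 2} and Corollary \ref{KAEHLER2} are in hand, the proof is a direct substitution and a comparison with the definition. The only thing to be careful about is unpacking $(\mathcal{K}_X J)Y = \mathcal{K}_X JY - J\mathcal{K}_X Y$ correctly so that the $-2g(\mathcal{K}_X JY,Z)$ and $+2g(\mathcal{K}_X JY, Z)$ terms combine into the single symmetric expression $\mathcal{K}_X JY + J\mathcal{K}_X Y$ that matches the holomorphic statistical manifold condition.
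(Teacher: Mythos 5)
Your proposal is correct and follows exactly the route the paper indicates, namely substituting Corollary \ref{KAEHLER2} into Lemma \ref{FUNDAMENTAL FORM 2} (and the starred analogues) to get $(\nabla_X\Omega)(Y,Z)=-g(\mathcal{K}_XJY+J\mathcal{K}_XY,Z)=-(\nabla_X^{\ast}\Omega)(Y,Z)$, from which the equivalence with the holomorphic statistical condition is immediate. The paper leaves these substitutions implicit, so your write-up simply supplies the details of the same argument.
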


\section{Statistical Almost Contact Metric Manifolds}

\begin{definition}
Let ($M^{2n+1},g,\nabla ,\nabla ^{\ast })$ be a statistical manifold. If $%
M^{2n+1}$ $\ $is an almost contact metric manifold then $M^{2n+1}$ is called
almost contact metric statistical manifold.
\end{definition}

Using anti-symmetry property of $\phi $ and the equation (\ref{STAT1}) we
have

\begin{lemma}
\label{Conjugate fi}Let ($M^{2n+1},g,\nabla ,\nabla ^{\ast })$ be an almost
contact metric statistical manifold. Then the following equation 
\begin{equation}
g((\nabla _{X}\phi )Y,Z)=-g(Y,(\nabla _{X}^{\ast }\phi )Z)  \label{AA3}
\end{equation}%
holds for any $X,Y,Z\in \Gamma (TM).$
\end{lemma}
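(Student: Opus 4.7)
The plan is to expand both sides using the definition $(\nabla_X \phi)Y = \nabla_X(\phi Y) - \phi(\nabla_X Y)$ (and similarly for $\nabla^*$), and then pivot between the two connections using the duality relation (\ref{STAT1}). The antisymmetry of $\phi$ relative to $g$, namely $g(\phi U, V) = -g(U, \phi V)$, which follows from the compatibility condition (\ref{1}) together with (\ref{2}), will be used twice to move $\phi$ across the metric.

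Concretely, I would start from
\begin{equation*}
g((\nabla_X \phi)Y, Z) = g(\nabla_X(\phi Y), Z) - g(\phi \nabla_X Y, Z),
\end{equation*}
apply (\ref{STAT1}) to the first summand to get $g(\nabla_X(\phi Y), Z) = X g(\phi Y, Z) - g(\phi Y, \nabla_X^{\ast} Z)$, and apply the antisymmetry of $\phi$ to the second summand to get $g(\phi \nabla_X Y, Z) = -g(\nabla_X Y, \phi Z)$. Then applying (\ref{STAT1}) once more to this last term gives $-g(\nabla_X Y, \phi Z) = -X g(Y, \phi Z) + g(Y, \nabla_X^{\ast} \phi Z) = X g(\phi Y, Z) + g(Y, \nabla_X^{\ast} \phi Z)$. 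Subtracting, the $X g(\phi Y, Z)$ terms cancel, leaving
\begin{equation*}
g((\nabla_X \phi)Y, Z) = -g(\phi Y, \nabla_X^{\ast} Z) - g(Y, \nabla_X^{\ast} \phi Z).
\end{equation*}
A final use of antisymmetry on the first term, $-g(\phi Y, \nabla_X^{\ast} Z) = g(Y, \phi \nabla_X^{\ast} Z)$, recombines the right-hand side into $-g(Y, \nabla_X^{\ast} \phi Z - \phi \nabla_X^{\ast} Z) = -g(Y, (\nabla_X^{\ast} \phi) Z)$, which is exactly (\ref{AA3}).

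There is no real obstacle: the argument is a bookkeeping exercise completely analogous to the derivation of (\ref{AZIZ1}) for the almost complex structure $J$ in Lemma \ref{Conjugate J}, with $\phi$ playing the role of $J$. The only point requiring any care is making sure the antisymmetry of $\phi$ relative to $g$ is in place; this is immediate from (\ref{1}) applied with $Y$ replaced by $\phi Y$, together with $\phi^2 = -Id + \eta \otimes \xi$ and $\eta \circ \phi = 0$ from (\ref{2}), which together yield $g(\phi X, Y) = -g(X, \phi Y)$.
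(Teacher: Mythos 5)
Your proof is correct and is exactly the computation the paper has in mind: the paper disposes of this lemma with the one-line remark that it follows from the anti-symmetry of $\phi$ and equation (\ref{STAT1}), and your argument simply carries out that bookkeeping in full, including the correct derivation of $g(\phi X,Y)=-g(X,\phi Y)$ from (\ref{1}) and (\ref{2}). No discrepancies.
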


Using (\ref{K1}) and (\ref{K2}), we easily find the next result.

\begin{lemma}
\label{CONSTAT}Let ($M^{2n+1},g,\nabla ,\nabla ^{\ast })$ be an almost
contact statistical manifold. Then%
\begin{eqnarray}
(\nabla _{X}\phi )Y &=&(\nabla _{X}^{0}\phi )Y+(\mathcal{K}_{X}\phi )Y,
\label{AAB1} \\
(\nabla _{X}^{\ast }\phi )Y &=&(\nabla _{X}^{0}\phi )Y-(\mathcal{K}_{X}\phi
)Y.  \label{AAB2}
\end{eqnarray}%
for any $X,Y\in \Gamma (TM).$
\end{lemma}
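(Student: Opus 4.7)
The plan is to expand $(\nabla_X\phi)Y$ directly from the Leibniz rule and substitute the decomposition of $\nabla$ (and $\nabla^{\ast}$) in terms of the Levi-Civita connection $\nabla^{0}$ and the difference tensor $\mathcal{K}$. Specifically, by definition of the covariant derivative of a $(1,1)$ tensor field,
\begin{equation*}
(\nabla_{X}\phi)Y = \nabla_{X}(\phi Y) - \phi(\nabla_{X}Y),
\end{equation*}
and analogous formulas hold with $\nabla^{0}$ and $\nabla^{\ast}$ in place of $\nabla$.

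First I would insert equation (\ref{K1}) twice, once applied to $\phi Y$ and once to $Y$. This replaces both occurrences of $\nabla$ by $\nabla^{0}+\mathcal{K}$ and regroups the terms into $(\nabla_{X}^{0}\phi)Y$ plus the correction $\mathcal{K}_{X}(\phi Y)-\phi(\mathcal{K}_{X}Y)$. Recognizing the latter as $(\mathcal{K}_{X}\phi)Y$ (interpreting $\mathcal{K}_{X}$ as a derivation acting on the $(1,1)$ tensor $\phi$), we obtain (\ref{AAB1}). For (\ref{AAB2}) I would run the identical computation using (\ref{K3}) instead of (\ref{K1}); the sign of $\mathcal{K}$ is reversed, so the correction term flips sign and gives $-(\mathcal{K}_{X}\phi)Y$.

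There is essentially no obstacle here: the argument is the exact contact-geometric mirror of Lemma \ref{Conjugate JJ} and uses nothing beyond (\ref{K1}), (\ref{K3}), and the product rule. One minor point worth flagging explicitly in the write-up is the convention that $(\mathcal{K}_{X}\phi)Y := \mathcal{K}_{X}(\phi Y)-\phi(\mathcal{K}_{X}Y)$, so that the concluding identification of terms is unambiguous; no appeal to the compatibility relation (\ref{K2}) or to the almost contact identities (\ref{1})--(\ref{3}) is needed.
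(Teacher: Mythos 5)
Your proof is correct and follows the same route the paper intends: the paper offers no written proof, merely noting that the lemma follows ``easily'' from (\ref{K1}) and (\ref{K2}), and your Leibniz-rule expansion with the substitutions $\nabla=\nabla^{0}+\mathcal{K}$ and $\nabla^{\ast}=\nabla^{0}-\mathcal{K}$ is exactly that computation. Your observation that only (\ref{K1}) and (\ref{K3}) are actually needed (not (\ref{K2})) is accurate, and spelling out the convention $(\mathcal{K}_{X}\phi)Y=\mathcal{K}_{X}(\phi Y)-\phi(\mathcal{K}_{X}Y)$ is a worthwhile clarification.
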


\begin{lemma}
\label{FUNDAMENTAL FORM}For an almost contact statistical manifold we have%
\begin{equation}
(\nabla _{X}\Phi )(Y,Z)=g((\nabla _{X}\phi )Y,Z)-2g(\mathcal{K}_{X}\phi Y,Z),
\label{AA4a}
\end{equation}%
and%
\begin{equation}
(\nabla _{X}^{\ast }\Phi )(Y,Z)=g((\nabla _{X}^{\ast }\phi )Y,Z)+2g(\mathcal{%
K}_{X}\phi Y,Z)  \label{AA5}
\end{equation}%
for any $X,Y,Z\in \Gamma (TM).$
\end{lemma}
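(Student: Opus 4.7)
The plan is to mirror the proof of Lemma \ref{FUNDAMENTAL FORM 2} almost line for line, since the argument there used only the antisymmetry of $J$ with respect to $g$, the defining relation \eqref{STAT1} of the conjugate connection, and the symmetry property \eqref{K2} of the difference tensor $\mathcal{K}$. The tensor $\phi$ of an almost contact metric structure is likewise antisymmetric: a short manipulation of \eqref{1}, using $\phi^{2}=-\mathrm{Id}+\eta\otimes\xi$ and $\eta\circ\phi=0$, gives $g(\phi X,Y)=-g(X,\phi Y)$, so the same template applies verbatim with $\phi$ in place of $J$ and $\Phi$ in place of $\Omega$.

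First I would write out the standard identity
\[
(\nabla_{X}\Phi)(Y,Z)=X\,\Phi(Y,Z)-\Phi(\nabla_{X}Y,Z)-\Phi(Y,\nabla_{X}Z),
\]
substitute $\Phi(Y,Z)=g(\phi Y,Z)$, and apply \eqref{STAT1} to the first term in the form $X g(\phi Y,Z)=g(\nabla_{X}\phi Y,Z)+g(\phi Y,\nabla_{X}^{\ast}Z)$. Grouping $g(\nabla_{X}\phi Y,Z)-g(\phi\nabla_{X}Y,Z)=g((\nabla_{X}\phi)Y,Z)$ leaves a residual $g(\phi Y,\nabla_{X}^{\ast}Z-\nabla_{X}Z)$, which by \eqref{K4} equals $-2g(\phi Y,\mathcal{K}_{X}Z)$. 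Finally I would invoke \eqref{K2} to move $\mathcal{K}_{X}$ across the metric, giving $-2g(\mathcal{K}_{X}\phi Y,Z)$, which produces \eqref{AA4a}.

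For \eqref{AA5} I would repeat the same calculation but apply \eqref{STAT1} in its dual form, namely $X g(\phi Y,Z)=g(\nabla_{X}^{\ast}\phi Y,Z)+g(\phi Y,\nabla_{X}Z)$. The same bookkeeping now produces $g(\phi Y,\nabla_{X}Z-\nabla_{X}^{\ast}Z)=+2g(\phi Y,\mathcal{K}_{X}Z)=+2g(\mathcal{K}_{X}\phi Y,Z)$, with the sign reversed because the roles of $\nabla$ and $\nabla^{\ast}$ are swapped.

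There is no substantive obstacle; the only point requiring care is to invoke the conjugate-connection identity \eqref{STAT1} in the correct direction for each of the two formulas, so that the subtraction $\nabla^{\ast}-\nabla$ (respectively $\nabla-\nabla^{\ast}$) appears with the right sign and \eqref{K4} can convert it into $\pm 2\mathcal{K}_{X}Z$. Everything else is a direct transcription of the almost Hermitian argument.
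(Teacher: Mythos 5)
Your proposal is correct and is essentially the paper's own argument: the paper proves Lemma \ref{FUNDAMENTAL FORM 2} by exactly this expansion of $X\Omega(Y,Z)$ via \eqref{STAT1} followed by \eqref{K4} and \eqref{K2}, and leaves the almost contact case as the verbatim transcription with $\phi$ and $\Phi$ in place of $J$ and $\Omega$, which is what you carry out. The sign reversal in \eqref{AA5} from applying \eqref{STAT1} in the dual direction is handled correctly.
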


By Lemma \ref{CONSTAT} and the relations (\ref{K1}) and (\ref{K2}) we easily
prove the following corollary.

\begin{corollary}
For an almost contact metric statistical manifold we have 
\begin{equation}
(\nabla _{X}\Phi )(Y,Z)=(\nabla _{X}^{0}\Phi )(Y,Z)-g(\mathcal{K}_{X}\phi
Y+\phi \mathcal{K}_{X}Y,Z)  \label{BB1}
\end{equation}%
and%
\begin{equation}
(\nabla _{X}^{\ast }\Phi )(Y,Z)=(\nabla _{X}^{0}\Phi )(Y,Z)+g(\mathcal{K}%
_{X}\phi Y+\phi \mathcal{K}_{X}Y,Z)  \label{BB2}
\end{equation}%
for any $X,Y,Z\in \Gamma (TM).$
\end{corollary}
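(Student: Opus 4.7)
The plan is to derive both identities by a direct substitution, plugging the decomposition from Lemma \ref{CONSTAT} into Lemma \ref{FUNDAMENTAL FORM} and simplifying with (\ref{K1}), (\ref{K2}). No new geometric input is needed; the content is purely algebraic bookkeeping.

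First I would establish the auxiliary identity
\begin{equation*}
(\nabla_{X}^{0}\Phi)(Y,Z) = g((\nabla_{X}^{0}\phi)Y, Z),
\end{equation*}
which follows by expanding $(\nabla^{0}_{X}\Phi)(Y,Z) = Xg(\phi Y,Z) - g(\phi\nabla^{0}_{X}Y,Z) - g(\phi Y,\nabla^{0}_{X}Z)$ and using that $\nabla^{0}$ is metric, so that the derivative of $g$ cancels the $g(\phi Y,\nabla^{0}_{X}Z)$ term. This step is the analogue, in the almost contact setting, of the fact used implicitly in Corollary \ref{FUNDAMENTAL FORM ULUDAG}.

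Next I would substitute the formula from Lemma \ref{CONSTAT}, namely $(\nabla_{X}\phi)Y = (\nabla_{X}^{0}\phi)Y + (\mathcal{K}_{X}\phi)Y$, into the right-hand side of (\ref{AA4a}) from Lemma \ref{FUNDAMENTAL FORM}. Expanding $(\mathcal{K}_{X}\phi)Y = \mathcal{K}_{X}\phi Y - \phi\mathcal{K}_{X}Y$ and using the auxiliary identity gives
\begin{equation*}
(\nabla_{X}\Phi)(Y,Z) = (\nabla_{X}^{0}\Phi)(Y,Z) + g(\mathcal{K}_{X}\phi Y - \phi\mathcal{K}_{X}Y, Z) - 2g(\mathcal{K}_{X}\phi Y, Z),
\end{equation*}
and collecting terms produces (\ref{BB1}). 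The second identity (\ref{BB2}) follows in the same way by feeding (\ref{AAB2}) into (\ref{AA5}); the sign flip on $(\mathcal{K}_{X}\phi)Y$ combined with the $+2g(\mathcal{K}_{X}\phi Y, Z)$ term in (\ref{AA5}) produces exactly the opposite-signed correction.

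The only thing to be careful about is the sign accounting and the application of the symmetry $g(\mathcal{K}_{X}Y, Z) = g(Y, \mathcal{K}_{X}Z)$ from (\ref{K2}), which is what lets us move $\phi$ across $\mathcal{K}$ inside the metric without extra signs. There is no substantive obstacle; the result is genuinely a corollary.
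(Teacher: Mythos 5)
Your proposal is correct and follows exactly the route the paper indicates (``By Lemma \ref{CONSTAT} and the relations (\ref{K1}) and (\ref{K2})''): substituting the decomposition $(\nabla_{X}\phi)Y=(\nabla_{X}^{0}\phi)Y+(\mathcal{K}_{X}\phi)Y$ into (\ref{AA4a}) and expanding $(\mathcal{K}_{X}\phi)Y=\mathcal{K}_{X}\phi Y-\phi\mathcal{K}_{X}Y$ gives (\ref{BB1}), and the dual case gives (\ref{BB2}); the sign bookkeeping checks out. The only quibble is that your closing remark about needing (\ref{K2}) to ``move $\phi$ across $\mathcal{K}$'' is not actually used in this substitution (it enters only in the proof of Lemma \ref{FUNDAMENTAL FORM} itself), but this does not affect correctness.
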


By (\ref{BB1}) and (\ref{BB2}) we have

\begin{corollary}[\protect\cite{FURUHATA2}]
For an almost contact metric statistical manifold we have 
\begin{equation}
(\nabla _{X}\Phi )(Y,Z)-(\nabla _{X}^{\ast }\Phi )(Y,Z)=-2g(\mathcal{K}%
_{X}\phi Y+\phi \mathcal{K}_{X}Y,Z)  \label{BBB1}
\end{equation}%
for any $X,Y,Z\in \Gamma (TM).$
\end{corollary}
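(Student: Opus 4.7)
The plan is to obtain (\ref{BBB1}) by directly subtracting the two identities displayed in the preceding Corollary. Writing (\ref{BB1}) in the form
\[
(\nabla _{X}\Phi )(Y,Z)=(\nabla _{X}^{0}\Phi )(Y,Z)-g(\mathcal{K}_{X}\phi Y+\phi \mathcal{K}_{X}Y,Z),
\]
and (\ref{BB2}) in the form
\[
(\nabla _{X}^{\ast }\Phi )(Y,Z)=(\nabla _{X}^{0}\Phi )(Y,Z)+g(\mathcal{K}_{X}\phi Y+\phi \mathcal{K}_{X}Y,Z),
\]
I note that the common $\nabla^{0}$-term cancels upon taking the difference, while the two $\mathcal{K}$-correction terms have opposite signs and therefore combine into twice $-g(\mathcal{K}_{X}\phi Y+\phi \mathcal{K}_{X}Y,Z)$. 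This is precisely (\ref{BBB1}), so no further work is needed beyond sign bookkeeping.

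As a consistency check, I would derive the same identity from Lemma \ref{FUNDAMENTAL FORM} without passing through the Levi-Civita connection. Subtracting (\ref{AA5}) from (\ref{AA4a}) gives
\[
(\nabla _{X}\Phi )(Y,Z)-(\nabla _{X}^{\ast }\Phi )(Y,Z)=g\bigl((\nabla _{X}\phi )Y-(\nabla _{X}^{\ast }\phi )Y,Z\bigr)-4g(\mathcal{K}_{X}\phi Y,Z).
\]
Using $(\nabla_{X}-\nabla_{X}^{\ast})\phi Y=2\mathcal{K}_{X}\phi Y$ from (\ref{K4}), together with the expansion $(\nabla_{X}\phi)Y-(\nabla_{X}^{\ast}\phi)Y=(\nabla_{X}-\nabla_{X}^{\ast})(\phi Y)-\phi(\nabla_{X}-\nabla_{X}^{\ast})Y=2\mathcal{K}_{X}\phi Y-2\phi \mathcal{K}_{X}Y$, one recovers exactly $-2g(\mathcal{K}_{X}\phi Y+\phi \mathcal{K}_{X}Y,Z)$.

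There is no genuine obstacle here: all of the structural work, namely the introduction of the $\mathcal{K}$-correction and the comparison of $\nabla,\nabla^{\ast}$ with $\nabla^{0}$, has already been carried out in Lemmas \ref{CONSTAT} and \ref{FUNDAMENTAL FORM} and in the preceding Corollary. The only care needed is with the sign of the $\mathcal{K}$-terms, which I would verify by expanding each side explicitly as above.
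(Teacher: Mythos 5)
Your proof is correct and takes essentially the same route as the paper: the paper obtains (\ref{BBB1}) precisely by subtracting (\ref{BB2}) from (\ref{BB1}), so the common $\nabla^{0}$-term cancels and the two correction terms add. Your secondary verification via Lemma \ref{FUNDAMENTAL FORM} and (\ref{K4}) is also sound, but it is an extra consistency check rather than a different argument.
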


For an almost contact metric manifold, the covariant derivative with respect
to Riemannian connection $\nabla ^{0}$ is given by%
\begin{eqnarray}
2g((\nabla ^{0}\phi )Y,Z) &=&3d\Phi (X,Y,Z)-3d\Phi (X,\phi Y,\phi
Z)+g(N^{(1)}(X,Y),\phi Z)  \notag \\
&&+((\mathcal{L}_{\phi X}\eta )(Y)-(\mathcal{L}_{\phi Y}\eta )(X))\eta (X)
\label{BB3} \\
&&+2d\eta (\phi Y,X)\eta (Z)-2d\eta (\phi Z,X)\eta (Y).  \notag
\end{eqnarray}%
(see \cite{Blair}).

Using (\ref{AAB1}), (\ref{AAB2}) and (\ref{BB3}), we have

\begin{theorem}
\label{FUNDAMENTAL FORM 4A}Let ($M^{2n+1},g,\phi ,\nabla ,\nabla ^{\ast })$
be an almost contact metric statistical manifold. The covariant derivatives $%
\nabla \phi ,\nabla ^{\ast }\phi $ of $J$ with respect to the torsion free
connections $\nabla $ and $\nabla ^{\ast }$ are given by%
\begin{eqnarray}
2g((\nabla _{X}\phi )Y,Z) &=&2g((\mathcal{K}_{X}\phi )Y,Z)  \notag \\
&&+3d\Phi (X,Y,Z)-3d\Phi (X,\phi Y,\phi Z)+g(N^{(1)}(X,Y),\phi Z)  \notag \\
&&+((\mathcal{L}_{\phi X}\eta )(Y)-(\mathcal{L}_{\phi Y}\eta )(X))\eta (X)
\label{BB4} \\
&&+2d\eta (\phi Y,X)\eta (Z)-2d\eta (\phi Z,X)\eta (Y),  \notag
\end{eqnarray}%
\begin{eqnarray}
2g((\nabla _{X}^{\ast }\phi )Y,Z) &=&-2g(\mathcal{K}_{X}\phi )Y,Z)  \notag \\
&&3d\Phi (X,Y,Z)-3d\Phi (X,\phi Y,\phi Z)+g(N^{(1)}(X,Y),\phi Z)  \notag \\
&&+((\mathcal{L}_{\phi X}\eta )(Y)-(\mathcal{L}_{\phi Y}\eta )(X))\eta (X)
\label{BB5} \\
&&+2d\eta (\phi Y,X)\eta (Z)-2d\eta (\phi Z,X)\eta (Y)  \notag
\end{eqnarray}%
for any $X,Y,Z\in \Gamma (TM).$
\end{theorem}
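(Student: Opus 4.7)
The plan is to obtain both formulas as direct consequences of the decompositions in Lemma~\ref{CONSTAT} combined with the classical expression \eqref{BB3} for $\nabla^{0}\phi$. First I would take equation \eqref{AAB1}, namely $(\nabla_{X}\phi)Y = (\nabla_{X}^{0}\phi)Y + (\mathcal{K}_{X}\phi)Y$, pair both sides with $Z$ using $g$, and multiply by $2$. This yields
\begin{equation*}
2g((\nabla_{X}\phi)Y,Z) = 2g((\nabla_{X}^{0}\phi)Y,Z) + 2g((\mathcal{K}_{X}\phi)Y,Z).
\end{equation*}
Substituting the right-hand side of \eqref{BB3} in for $2g((\nabla_{X}^{0}\phi)Y,Z)$ produces exactly the stated formula \eqref{BB4}.

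For the second identity I would argue in precisely the same way, starting from \eqref{AAB2}, namely $(\nabla_{X}^{\ast}\phi)Y = (\nabla_{X}^{0}\phi)Y - (\mathcal{K}_{X}\phi)Y$. Taking inner product with $Z$, doubling, and again invoking \eqref{BB3} produces \eqref{BB5}; the only difference with the $\nabla$-case is the sign on the $\mathcal{K}$-term, which is exactly what the statement records.

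Since the theorem is effectively a reformulation of the classical Blair formula in the statistical setting via the $(1,2)$-tensor $\mathcal{K}$, there is no genuine obstacle: all the work is packaged into Lemma~\ref{CONSTAT} and the standard identity \eqref{BB3}. The only point that requires minor care is to make sure the classical identity \eqref{BB3} is applied with the connection $\nabla^{0}$ and not inadvertently with $\nabla$ or $\nabla^{\ast}$, and to confirm that the skew-symmetry $g(\mathcal{K}_{X}\phi Y,Z) = -g(\mathcal{K}_{X}Y,\phi Z)$ used implicitly via \eqref{K2} does not create any hidden sign discrepancy. With those checks, the proof reduces to two lines of substitution, which is precisely how the authors' short proof is expected to read.
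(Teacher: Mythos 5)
Your proposal is correct and coincides with the paper's own (essentially unwritten) argument: the authors simply state ``Using (\ref{AAB1}), (\ref{AAB2}) and (\ref{BB3}), we have'' and then record the theorem, which is exactly your two-line substitution of the classical formula for $2g((\nabla_{X}^{0}\phi)Y,Z)$ into the decompositions of Lemma~\ref{CONSTAT}. Nothing further is needed.
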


\section{Almost Cosymplectic Statistical Manifolds}

For an almost cosymplectic statistical manifold we define the $(1,1)$-tensor
fields $\mathcal{A}$ , $\mathcal{A}^{\ast }$ and $\mathcal{A}^{0}$ by%
\begin{equation}
\mathcal{A}X=-\nabla _{X}\xi ,\mathcal{A}^{\ast }X=-\nabla _{X}^{\ast }\xi 
\text{ and }\mathcal{A}^{0}X=-\nabla _{X}^{0}\xi ,\text{ }\forall X\in
\Gamma (TM).  \label{AA1}
\end{equation}%
Since $2\nabla ^{0}=\nabla +\nabla ^{\ast },$ by (\ref{AA1}), we obtain 
\begin{equation}
2\mathcal{A}^{0}=\mathcal{A}+\mathcal{A}^{\ast }.  \label{AA2}
\end{equation}

\begin{proposition}
\label{Proposition AFI}For an almost cosymplectic statistical manifold we
have%
\begin{eqnarray*}
i)\text{ }\mathcal{L}_{\xi }\eta  &=&0,\text{ }ii)\text{\ }g(\mathcal{A}%
X,Y)=g(X,\mathcal{A}Y), \\
\text{ }iii)\text{\ }g(\mathcal{A}^{\ast }X,Y) &=&g(X,\mathcal{A}^{\ast }Y)%
\text{, \ }iv)\mathcal{A\xi =-A}^{\ast }\xi =\emph{K}_{\xi }\xi , \\
\text{\ }v)\text{ }(\nabla _{\xi }\phi )X &=&\phi \mathcal{A}X+\mathcal{A}%
^{\ast }\phi X,\text{ \ }vi)(\nabla _{\xi }^{\ast }\phi )X=\phi \mathcal{A}%
^{\ast }X+\mathcal{A}\phi X, \\
vii)\,\mathcal{A}\phi +\phi \mathcal{A} &=&-(\mathcal{A}^{\ast }\phi +\phi 
\mathcal{A}^{\ast })
\end{eqnarray*}%
where $\mathcal{L}$ indicates the operator of the Lie differentiation, $X,Y$
are arbitrary vector fields on $M.$
\end{proposition}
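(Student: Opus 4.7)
The statement splits into an isolated computation (i), the symmetry claims (ii)--(iv), and three $\phi$-relations (v)--(vii).

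For (i), I would apply the Cartan magic formula $\mathcal{L}_\xi\eta = \iota_\xi\, d\eta + d(\iota_\xi\eta)$: both summands vanish because $d\eta = 0$ and $\iota_\xi\eta = \eta(\xi) = 1$ is constant. For (ii) and (iii), the plan is to translate symmetry of $\mathcal{A}$, $\mathcal{A}^{\ast}$ into closedness of $\eta$. Using (\ref{STAT1}) one computes $(\nabla_X\eta)(Y) = X g(Y,\xi) - g(\nabla_X Y,\xi) = g(Y, \nabla^{\ast}_X\xi) = -g(\mathcal{A}^{\ast}X, Y)$, and likewise $(\nabla^{\ast}_X\eta)(Y) = -g(\mathcal{A}X, Y)$. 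Torsion-freeness of $\nabla$ and $\nabla^{\ast}$ gives $d\eta(X,Y) = (\nabla_X\eta)(Y) - (\nabla_Y\eta)(X) = 0$, which forces the symmetry of both bilinear forms. For (iv), I set $Y=\xi$ in (ii), (iii) and use $Xg(\xi,\xi) = g(\nabla_X\xi,\xi) + g(\xi, \nabla^{\ast}_X\xi) = 0$ to obtain $\mathcal{A}\xi + \mathcal{A}^{\ast}\xi = 0$; by (\ref{AA2}) this is $\nabla^{0}_\xi\xi = 0$, and (\ref{K1}) then yields $\mathcal{K}_\xi\xi = \nabla_\xi\xi = -\mathcal{A}\xi = \mathcal{A}^{\ast}\xi$.

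For (v) and (vi), the key is to extract a formula for $(\nabla^{0}_\xi\phi)X$ from $d\Phi = 0$. Writing out $3\,d\Phi(\xi, X, Y) = 0$, using $\phi\xi = 0$, the symmetry of $\mathcal{A}^{0}$, and the skew-adjointness of $\phi$ produces the identity $(\nabla^{0}_\xi\phi)X = \phi\mathcal{A}^{0}X + \mathcal{A}^{0}\phi X$. I then decompose $(\nabla_\xi\phi)X = (\nabla^{0}_\xi\phi)X + (\mathcal{K}_\xi\phi)X$ via (\ref{AAB1}) and rewrite the second summand using (\ref{K4}) together with $\mathcal{K}_\xi X = \mathcal{K}_X\xi = \tfrac{1}{2}(\mathcal{A}^{\ast} - \mathcal{A})X$; substituting $2\mathcal{A}^{0} = \mathcal{A} + \mathcal{A}^{\ast}$ and simplifying yields (v), with (vi) following by the same computation after swapping $\nabla$ and $\nabla^{\ast}$. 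For (vii) it suffices, by (\ref{AA2}), to prove $\mathcal{A}^{0}\phi + \phi\mathcal{A}^{0} = 0$; this I would obtain by applying $\nabla^{0}$ to $\phi^{2} = -Id + \eta\otimes\xi$, evaluating at $\xi$, and combining the expression above for $(\nabla^{0}_\xi\phi)$ with $\mathcal{A}^{0}\xi = 0$ to deduce $\phi\mathcal{A}^{0}\phi = \mathcal{A}^{0}$, which is equivalent to the claimed anti-commutation.

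The principal technical obstacle is establishing the Levi-Civita identity $(\nabla^{0}_\xi\phi)X = \phi\mathcal{A}^{0}X + \mathcal{A}^{0}\phi X$ from $d\Phi = 0$ and its consequence $\phi\mathcal{A}^{0}\phi = \mathcal{A}^{0}$ used for (vii). These require careful juggling of the antisymmetry $g(\phi X,Y) = -g(X, \phi Y)$, the symmetry of $\mathcal{A}^{0}$, and the vanishing $\mathcal{A}^{0}\xi = 0$; the remaining steps reduce to formal duality inside the statistical structure.
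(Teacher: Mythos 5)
Your argument is correct, and for parts i)--iii) it is essentially the paper's own: Cartan's formula for i), and closedness of $\eta$ combined with (\ref{STAT1}) to get the symmetry of $\mathcal{A}$ and $\mathcal{A}^{\ast}$. Where you genuinely diverge is in v)--vii). The paper obtains v) and vi) by noting that $\mathcal{L}_{\xi}\Phi=i_{\xi}d\Phi+d(i_{\xi}\Phi)=0$ and expanding $(\mathcal{L}_{\xi}\Phi)(X,Y)$ directly through the pair of dual connections, and it obtains vii) by merely citing the classical anticommutation $\mathcal{A}^{0}\phi+\phi\mathcal{A}^{0}=0$ and polarizing via (\ref{AA2}). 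You instead extract the Levi--Civita identity $(\nabla_{\xi}^{0}\phi)X=\phi\mathcal{A}^{0}X+\mathcal{A}^{0}\phi X$ from $d\Phi(\xi,\cdot,\cdot)=0$ and then transfer it to $\nabla$ and $\nabla^{\ast}$ through $\mathcal{K}_{\xi}=\frac{1}{2}(\mathcal{A}^{\ast}-\mathcal{A})$; the bookkeeping does close and yields exactly $\phi\mathcal{A}X+\mathcal{A}^{\ast}\phi X$. Your route is slightly longer but buys something the paper does not supply: an actual proof of the fact quoted for vii), since your identity together with $\nabla_{\xi}^{0}(\phi^{2})$ and $\mathcal{A}^{0}\xi=0$ gives $\phi\mathcal{A}^{0}\phi=\mathcal{A}^{0}$ and hence the anticommutation. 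The paper's Lie-derivative computation is the shorter path once that classical fact is taken for granted.

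One point deserves attention in iv). Your computation gives $\mathcal{K}_{\xi}\xi=\nabla_{\xi}\xi=-\mathcal{A}\xi=\mathcal{A}^{\ast}\xi$, whereas the proposition asserts $\mathcal{A}\xi=-\mathcal{A}^{\ast}\xi=\mathcal{K}_{\xi}\xi$; the two are compatible only if $\mathcal{A}\xi=0$, which is not assumed and fails in the paper's own Example \ref{YAZLA}, where $\nabla_{E_{0}}E_{0}=E_{0}$ gives $\mathcal{A}\xi=-E_{0}$ while $\mathcal{K}_{E_{0}}E_{0}=E_{0}$. So the last equality of iv) as printed carries a sign error and your version is the correct one; this is a defect of the statement, not a gap in your proof.
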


\begin{proof}
Using Cartan magic formula 
\begin{equation*}
\mathcal{L}_{\xi }\eta =di_{\xi }(\eta )+i_{\xi }d(\eta ),
\end{equation*}%
and $d\eta =0,i_{\xi }\eta =1$ we obtain $i)$. Again noting $\eta $ is
closed and using (\ref{STAT1}) we have 
\begin{eqnarray*}
0 &=&2d\eta (X,Y)=X\eta (Y)-Y\eta (X)-\eta ([X,Y)) \\
&=&-g(Y,\mathcal{A}^{\ast }X)+g(X,\mathcal{A}^{\ast }Y)
\end{eqnarray*}%
and by help of similar calculations respect to torsion free affine
connection $\nabla $, we obtain%
\begin{equation*}
g(\mathcal{A}X,Y)=g(X,\mathcal{A}Y).
\end{equation*}%
So we get $ii)$ and $iii).$

From (\ref{K1}) and (\ref{K2}) we get $iv).$

Cartan magic formula 
\begin{equation}
\mathcal{L}_{V}\Omega =di_{V}(\Omega )+i_{V}d(\Omega )  \label{CMF}
\end{equation}%
is valid for any form $\Omega \in \wedge (M)$ and $V$ $\in \Gamma (TM).$ Let
us apply (\ref{CMF}) to the (\ref{3}). Since $i_{\xi }(\Phi )=0$ and $d\Phi
=0$, we have 
\begin{equation}
\mathcal{L}_{\xi }\Phi =0.  \label{LD2F}
\end{equation}%
On the other hand, the Lie derivative of $2$-form $\Phi $ with respect to
characteristic vector field $\xi $ can be expressed as%
\begin{eqnarray*}
(\mathcal{L}_{\xi }\Phi )(X,Y) &=&\mathcal{L}_{\xi }\Phi (X,Y)-\Phi (%
\mathcal{L}_{\xi }X,Y)-\Phi (X,\mathcal{L}_{\xi }Y) \\
&=&\xi g(\phi X,Y)-g(\phi \nabla _{\xi }X,Y)+g(\phi \nabla _{X}\xi ,Y) \\
&&-g(\phi X,\nabla _{\xi }Y)+g(\phi X,\nabla _{Y}\xi ) \\
&=&g(\nabla _{\xi }\phi X,Y)+g(\phi X,\nabla _{\xi }^{\ast }Y) \\
&&-g(\phi \nabla _{\xi }X,Y)+g(\phi \nabla _{X}\xi ,Y) \\
&&-g(\phi X,\nabla _{\xi }Y)+g(\phi X,\nabla _{Y}\xi ) \\
&=&g((\nabla _{\xi }\phi )X,Y)-g(\phi \mathcal{A}X,Y)-g(\phi X,\mathcal{A}%
Y)+g(\phi X,(\mathcal{A}-\mathcal{A}^{\ast })Y) \\
&=&g((\nabla _{\xi }\phi )X,Y)-g(\phi \mathcal{A}X,Y)-g(\mathcal{A}^{\ast
}\phi X,Y).
\end{eqnarray*}%
We thus obtain $(\nabla _{\xi }\phi )X=\phi \mathcal{A}X+\mathcal{A}^{\ast
}\phi X,$ According to conjugate connection $\nabla ^{\ast }$ we conclude
that $(\nabla _{\xi }^{\ast }\phi )X=\phi \mathcal{A}^{\ast }X+\mathcal{A}%
\phi X.$ So we have $v)$ and $vi).$ We know that $\mathcal{A}^{0}\phi +\phi 
\mathcal{A}^{0}=0$ is valid for almost cosymplectic manifold. Thus, by (\ref%
{AA2}) we get $vii).$
\end{proof}

\begin{remark}
\label{AKSI}By Proposition \ref{Proposition AFI} , we say that $\mathcal{A}%
\xi =0$ if and only if $\mathcal{A}^{\ast }\xi =0$ for an almost
cosymplectic statistical manifold.
\end{remark}

\begin{proposition}
Let ($M^{2n+1},g,\nabla ,\nabla ^{\ast })$ be an almost cosymplectic
statistical manifold. Then%
\begin{eqnarray}
(\nabla _{X}\Phi )(Y,Z)+(\nabla _{Z}\Phi )(X,Y)+(\nabla _{Y}\Phi )(Z,X) &=&0,
\label{KF1a} \\
(\nabla _{X}^{\ast }\Phi )(Y,Z)+(\nabla _{Z}^{\ast }\Phi )(X,Y)+(\nabla
_{Y}^{\ast }\Phi )(Z,X) &=&0  \label{KF2a}
\end{eqnarray}%
for any $X,Y,Z\in \Gamma (TM)$ .

\begin{proof}
Since $M^{2n+1}$ is almost cosymplectic the relation 
\begin{equation}
(\nabla _{X}^{0}\Phi )(Y,Z)+(\nabla _{Z}^{0}\Phi )(X,Y)+(\nabla _{Y}^{0}\Phi
)(Z,X)=0  \label{KF3a}
\end{equation}%
holds. If we insert the relation (\ref{BB1}) into above expression, we find%
\begin{eqnarray}
0 &=&(\nabla _{X}\Phi )(Y,Z)+(\nabla _{Y}\Phi )(Z,X)+(\nabla _{Z}\Phi )(X,Y)
\notag \\
&&+g(\mathcal{K}_{X}\phi Y+\phi \mathcal{K}_{X}Y,Z)+g(\mathcal{K}_{Y}\phi
Z+\phi \mathcal{K}_{X}Z,X)  \label{KF13c} \\
&&+g(\mathcal{K}_{Z}\phi X+\phi \mathcal{K}_{Z}X,Y).  \notag
\end{eqnarray}%
If we make use of the relation (\ref{AZIZ IDENTITIY}), we have%
\begin{equation}
0=(\nabla _{X}\Phi )(Y,Z)+(\nabla _{Y}\Phi )(Z,X)+(\nabla _{Z}\Phi )(X,Y).
\label{KF13e}
\end{equation}

Employing (\ref{BB2}) into (\ref{KF3a}) and using (\ref{KF13e}), we can
easily verify that the equality 
\begin{equation*}
0=(\nabla _{X}^{\ast }\Phi )(Y,Z)+(\nabla _{Z}^{\ast }\Phi )(X,Y)+(\nabla
_{Y}^{\ast }\Phi )(Z,X)
\end{equation*}%
holds.
\end{proof}
\end{proposition}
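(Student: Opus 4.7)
The plan is to reduce each cyclic sum to the corresponding Levi-Civita cyclic sum, which vanishes because $d\Phi=0$, and then kill the remaining $\mathcal{K}$-terms using the skew-symmetric identity of Proposition \ref{FUNDAMENTAL FORM 86} applied to $\psi=\phi$.

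First, I would record that since the manifold is almost cosymplectic, $\Phi$ is closed, so the standard cyclic identity
\begin{equation*}
(\nabla _{X}^{0}\Phi )(Y,Z)+(\nabla _{Z}^{0}\Phi )(X,Y)+(\nabla _{Y}^{0}\Phi )(Z,X)=0
\end{equation*}
holds. Next, I would substitute the formula (\ref{BB1}) expressing $(\nabla_X\Phi)(Y,Z)$ in terms of $(\nabla_X^0\Phi)(Y,Z)$ and the correction term $-g(\mathcal{K}_X\phi Y+\phi \mathcal{K}_X Y,Z)$ into each of the three summands of the cyclic sum for $\nabla$.

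The resulting expression splits into the Levi-Civita cyclic sum (which is zero) plus a cyclic sum of the correction terms. The key observation is that $\phi$ is a skew-symmetric $(1,1)$-tensor field with respect to $g$ (by the compatibility (\ref{1}) and (\ref{3})), so Proposition \ref{FUNDAMENTAL FORM 86} applies with $\psi=\phi$, giving precisely
\begin{equation*}
g(\mathcal{K}_{X}\phi Y+\phi \mathcal{K}_{X}Y,Z)+g(\mathcal{K}_{Z}\phi X+\phi \mathcal{K}_{Z}X,Y)+g(\mathcal{K}_{Y}\phi Z+\phi \mathcal{K}_{Y}Z,X)=0.
\end{equation*}
Thus the correction cyclic sum also vanishes, and (\ref{KF1a}) follows.

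For the conjugate statement (\ref{KF2a}), I would repeat the same argument but use (\ref{BB2}) in place of (\ref{BB1}); the only change is that the correction terms appear with opposite sign, and they again vanish cyclically by the same application of Proposition \ref{FUNDAMENTAL FORM 86}. Alternatively, having established (\ref{KF1a}), one can combine it with (\ref{KF3a}) and (\ref{BB2}) to obtain (\ref{KF2a}) in a single line. There is no genuine obstacle here; the only thing to verify carefully is that Proposition \ref{FUNDAMENTAL FORM 86} is indeed applicable, i.e.\ that $\phi$ is skew with respect to $g$, which is immediate from the almost contact metric compatibility condition.
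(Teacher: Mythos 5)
Your proposal is correct and follows essentially the same route as the paper: reduce to the Levi-Civita cyclic sum via (\ref{BB1})/(\ref{BB2}), use $d\Phi=0$ to kill it, and apply Proposition \ref{FUNDAMENTAL FORM 86} with $\psi=\phi$ to eliminate the cyclic $\mathcal{K}$-correction terms. Your explicit check that $\phi$ is skew with respect to $g$ is a worthwhile addition, but the argument is the same.
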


\begin{proposition}
\label{LKSI}\ \ For an almost cosymplectic statistical manifold we have\ \ \
\ \ \ 
\begin{eqnarray*}
i)(\mathcal{L}_{\xi }g)(X,Y) &=&-2g(\mathcal{A}^{0}X,Y)=-g((\mathcal{A+A}%
^{\ast })X,Y), \\
ii)\text{ }(\nabla _{X}\eta )Y &=&(\nabla _{X}\eta )Y,\text{ }iii)\text{ }%
(\nabla _{X}^{\ast }\eta )Y=(\nabla _{X}^{\ast }\eta )Y.
\end{eqnarray*}%
\ \ \ 
\end{proposition}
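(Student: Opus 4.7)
My plan is to treat part (i) by a direct computation of the Lie derivative of the metric, and then to handle parts (ii) and (iii), which I read as the symmetry of $\nabla \eta$ and $\nabla^{\ast}\eta$ in their two arguments (the printed right-hand sides appear to be typographic duplicates of the left-hand sides). No serious obstacle is anticipated: everything reduces to identities already established in Proposition \ref{Proposition AFI} together with a standard identity for $d\eta$.

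For part (i), the starting point is the classical formula
\begin{equation*}
(\mathcal{L}_{\xi}g)(X,Y) = g(\nabla^{0}_{X}\xi, Y) + g(X, \nabla^{0}_{Y}\xi),
\end{equation*}
which is available because $\nabla^{0}$ is torsion-free and metric. Substituting the definition $\mathcal{A}^{0}X = -\nabla^{0}_{X}\xi$ from (\ref{AA1}) immediately gives
\begin{equation*}
(\mathcal{L}_{\xi}g)(X,Y) = -g(\mathcal{A}^{0}X, Y) - g(X, \mathcal{A}^{0}Y).
\end{equation*}
To turn this into $-2g(\mathcal{A}^{0}X, Y)$ I need $\mathcal{A}^{0}$ to be $g$-self-adjoint. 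This is where I invoke Proposition \ref{Proposition AFI} (ii)--(iii): both $\mathcal{A}$ and $\mathcal{A}^{\ast}$ are $g$-symmetric, so by (\ref{AA2}) the average $\mathcal{A}^{0} = \tfrac{1}{2}(\mathcal{A}+\mathcal{A}^{\ast})$ is $g$-symmetric as well. Writing the conclusion once in terms of $\mathcal{A}^{0}$ and once in terms of $\mathcal{A}+\mathcal{A}^{\ast}$ produces both displayed equalities in (i).

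For parts (ii) and (iii) the crucial ingredient is $d\eta = 0$, which is part of the almost cosymplectic data (\ref{4}). I would recall the standard identity that for any torsion-free affine connection $D$ one has $d\eta(X,Y) = (D_{X}\eta)(Y) - (D_{Y}\eta)(X)$. Since $\nabla$ and $\nabla^{\ast}$ are both torsion-free (built into the definition of a statistical structure), applying this identity to each of them and using $d\eta = 0$ yields $(\nabla_{X}\eta)Y = (\nabla_{Y}\eta)X$ and $(\nabla_{X}^{\ast}\eta)Y = (\nabla_{Y}^{\ast}\eta)X$ respectively, which is the natural reading of (ii) and (iii).

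The only point requiring any care is to make sure the self-adjointness of $\mathcal{A}^{0}$ in part (i) is justified by pointing to the earlier proposition rather than re-derived from the Koszul formula; apart from that, the argument is a short chain of substitutions.
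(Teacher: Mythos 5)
Your proof is correct, and your reading of (ii) and (iii) as the symmetry $(\nabla _{X}\eta )Y=(\nabla _{Y}\eta )X$ and $(\nabla _{X}^{\ast }\eta )Y=(\nabla _{Y}^{\ast }\eta )X$ is exactly what the paper's own proof establishes (the printed right-hand sides are indeed typographical duplicates). Your route differs from the paper's in a worthwhile way. For (i) the paper never leaves the statistical formalism: it expands $\mathcal{L}_{\xi }g$ from the bracket definition, converts $\xi g(X,Y)$ via the conjugacy relation (\ref{STAT1}), uses torsion-freeness of $\nabla $ on the brackets, and only at the last step trades $\nabla _{\xi }^{\ast }Y-\nabla _{\xi }Y=-2\mathcal{K}_{\xi }Y$ and $\mathcal{K}_{Y}\xi =\mathcal{A}^{0}Y-\mathcal{A}Y$ to land on $-2g(X,\mathcal{A}^{0}Y)$; you instead invoke the classical Killing-type identity $(\mathcal{L}_{\xi }g)(X,Y)=g(\nabla _{X}^{0}\xi ,Y)+g(X,\nabla _{Y}^{0}\xi )$ for the Levi-Civita connection and finish with the $g$-symmetry of $\mathcal{A}^{0}$, which you correctly obtain from Proposition \ref{Proposition AFI} (ii)--(iii) together with (\ref{AA2}). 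Your computation is shorter and bypasses $\mathcal{K}$ entirely; the paper's keeps the roles of $\mathcal{A}$, $\mathcal{A}^{\ast }$ and $\mathcal{K}_{\xi }$ visible, which is what feeds the later curvature identities. For (ii)--(iii) the paper derives $(\nabla _{X}\eta )Y=-g(Y,\mathcal{A}^{\ast }X)$ from (\ref{STAT1}) and then uses the symmetry of $\mathcal{A}^{\ast }$, and gets (iii) by combining (ii) with $2\nabla ^{0}=\nabla +\nabla ^{\ast }$; you apply $d\eta (X,Y)=(D_{X}\eta )Y-(D_{Y}\eta )X$ (valid up to a conventional factor for any torsion-free $D$) directly to $\nabla $ and $\nabla ^{\ast }$, which is equivalent since both arguments ultimately rest on $d\eta =0$, but yours treats the two connections symmetrically. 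The paper's version has the side benefit of producing the explicit formula $(\nabla _{X}\eta )Y=-g(\mathcal{A}^{\ast }X,Y)$, which is reused elsewhere; for the statement as posed, either argument suffices.
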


\begin{proof}
By direct calculations and using (\ref{STAT1}), we find 
\begin{eqnarray*}
(\mathcal{L}_{\xi }g)(X,Y) &=&\xi g(X,Y)-g(\left[ \xi ,X\right] ,Y)-g(X,%
\left[ \xi ,Y\right] ) \\
&=&g(\nabla _{\xi }X,Y)+g(X,\nabla _{\xi }^{\ast }Y) \\
&&-g(\nabla _{\xi }X,Y)-g(\mathcal{A}X,Y) \\
&&-g(X,\nabla _{\xi }Y)-g(X,\mathcal{A}Y).
\end{eqnarray*}

Due to the symmetry of the operator $A$ , we get 
\begin{equation*}
(\mathcal{L}_{\xi }g)(X,Y)=g(X,\nabla _{\xi }^{\ast }Y-\nabla _{\xi }Y)-2g(%
\mathcal{A}X,Y).
\end{equation*}%
In view of (\ref{K2}), we obtain 
\begin{eqnarray*}
(\mathcal{L}_{\xi }g)(X,Y) &=&-2g(X,\mathcal{K}_{\xi }Y)-2g(\mathcal{A}X,Y)
\\
&=&-2g(X,\mathcal{K}_{Y}\xi )-2g(\mathcal{A}X,Y) \\
&=&-2g(X,\mathcal{A}^{0}Y).
\end{eqnarray*}%
It is clear that one has%
\begin{equation*}
(\nabla _{X}\eta )Y=-g(Y,\mathcal{A}^{\ast }X)=-g(\mathcal{A}^{\ast
}Y,X)=(\nabla _{Y}\eta )X
\end{equation*}%
which completes $ii).$

Since $d\eta =0,$ one can easily get 
\begin{equation}
(\nabla _{X}^{0}\eta )Y=(\nabla _{X}^{0}\eta )Y.  \label{C1}
\end{equation}%
From $ii)$ and (\ref{C1}), we find that $(\nabla _{X}^{\ast }\eta )Y=(\nabla
_{X}^{\ast }\eta )Y.$
\end{proof}

\begin{proposition}
\label{SECOND FUNDFORM}For an almost cosymplectic statistical manifold we
have%
\begin{eqnarray}
\text{ }(\nabla _{X}\Phi )(Y,\phi Z)+(\nabla _{X}^{\ast }\Phi )(Z,\phi Y)
&=&\eta (Y)g(\mathcal{A}X,Z)+\eta (Z)g(\mathcal{A}^{\ast }X,Y)  \label{DF1}
\\
(\nabla _{X}^{\ast }\Phi )(\phi Z,\phi Y)-(\nabla _{X}\Phi )(Y,Z) &=&\eta
(Y)g(\mathcal{A}X,\phi Z)-\eta (Z)g(\mathcal{A}X,\phi Y)  \label{DF2}
\end{eqnarray}%
for any $X,Y,Z\in \Gamma (TM).$
\end{proposition}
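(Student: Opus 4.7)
My plan is to prove both identities by direct expansion, relying on Lemma \ref{FUNDAMENTAL FORM}, Lemma \ref{Conjugate fi}, and the formulas $(\nabla_X \eta)(Y) = -g(\mathcal{A}^*X, Y)$ and $(\nabla_X^* \eta)(Y) = -g(\mathcal{A}X, Y)$ established inside the proof of Proposition \ref{LKSI}. The cancellations of $\mathcal{K}$-type contributions will throughout exploit the $g$-symmetry of $\mathcal{K}_X$ from $(\ref{K2})$. The central algebraic tool is the identity
\[
(\nabla_X \phi)(\phi Y) + \phi(\nabla_X \phi) Y = (\nabla_X \eta)(Y)\,\xi - \eta(Y)\,\mathcal{A}X,
\]
obtained by differentiating $\phi^2 = -\mathrm{Id} + \eta \otimes \xi$ with respect to $\nabla$ and using $\nabla_X \xi = -\mathcal{A}X$.

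For $(\ref{DF1})$, I apply $(\ref{AA4a})$ with second argument $\phi Z$ and $(\ref{AA5})$ with arguments $(Z,\phi Y)$, then add. The terms $-2g(\mathcal{K}_X \phi Y,\phi Z)+2g(\mathcal{K}_X \phi Z,\phi Y)$ vanish by the $g$-symmetry of $\mathcal{K}_X$. Using Lemma \ref{Conjugate fi} I rewrite $g((\nabla_X^*\phi)Z,\phi Y) = -g((\nabla_X\phi)\phi Y, Z)$, and substituting the $\phi^2$-identity above together with $(\nabla_X \eta)(Y) = -g(\mathcal{A}^*X, Y)$ turns the sum into
\[
g((\nabla_X\phi)Y,\phi Z) - g((\nabla_X\phi)Y,\phi Z) + \eta(Z)g(\mathcal{A}^*X, Y) + \eta(Y)g(\mathcal{A}X, Z),
\]
which collapses to the right-hand side of $(\ref{DF1})$.

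For $(\ref{DF2})$, the same strategy applies, but one additionally expands $\phi^2 Z = -Z + \eta(Z)\xi$ inside the $\mathcal{K}$-term of $(\nabla_X^*\Phi)(\phi Z,\phi Y)$, producing $-2g(\mathcal{K}_X Z,\phi Y)+2\eta(Z)g(\mathcal{K}_X \xi,\phi Y)$. The first summand cancels against $+2g(\mathcal{K}_X \phi Y, Z)$ coming from $(\nabla_X\Phi)(Y,Z)$, again by the $g$-symmetry of $\mathcal{K}_X$. Applying Lemma \ref{Conjugate fi} and the $\phi^2$-identity to $g((\nabla_X^*\phi)\phi Z,\phi Y)$, the claim reduces to the residual identity
\[
2g(\mathcal{K}_X \xi,\phi Y) = g((\mathcal{A}^* - \mathcal{A})X,\phi Y),
\]
which is immediate from $(\ref{K4})$ applied at $\xi$, since $\mathcal{A}^*X - \mathcal{A}X = \nabla_X\xi - \nabla_X^*\xi = 2\mathcal{K}_X\xi$.

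The main obstacle I expect is purely bookkeeping: several substitutions generate a cloud of signed terms involving $\eta$, $\xi$, $\phi$ and $\mathcal{K}$, and one must pair them carefully to see all the cancellations. Apart from this algebraic accounting, no new tools beyond the preceding lemmas are required.
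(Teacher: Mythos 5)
Your argument is correct and follows essentially the same route as the paper: both proofs rest on differentiating $\phi ^{2}=-\mathrm{Id}+\eta \otimes \xi $, the conjugation identity of Lemma \ref{Conjugate fi}, Lemma \ref{FUNDAMENTAL FORM}, and the $g$-symmetry of $\mathcal{K}_{X}$ to kill the $\mathcal{K}$-terms. The only (cosmetic) difference is in (\ref{DF2}): the paper obtains it by substituting $Z\rightarrow \phi Z$ into the already-proved (\ref{DF1}) after computing $(\nabla _{X}\Phi )(Y,\xi )=g(\mathcal{A}X,\phi Y)$, whereas you re-expand directly and reduce to $2\mathcal{K}_{X}\xi =(\mathcal{A}^{\ast }-\mathcal{A})X$ — the same underlying bookkeeping, correctly carried out.
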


\begin{proof}
Differentiating the identity $\phi ^{2}=-I+\eta \otimes \xi $ covariantly,
we obtain%
\begin{equation*}
(\nabla _{X}\phi )\phi Y+\phi (\nabla _{X}\phi )Y=-g(Y,\mathcal{A}^{\ast
}X)\xi -\eta (Y)\mathcal{A}X.
\end{equation*}%
Projecting this equality onto $Z$ and then using antisymmetry of $\phi ,$ we
get%
\begin{equation*}
g((\nabla _{X}\phi )\phi Y,Z)-g((\nabla _{X}\phi )Y,\phi Z)=-\eta (Z)g(%
\mathcal{A}^{\ast }X,Y)-\eta (Y)g(\mathcal{A}X,Z).
\end{equation*}%
Recalling $g((\nabla _{X}\phi )Y,Z)=-g(Y,(\nabla _{X}^{\ast }\phi )Z),$ we
obtain 
\begin{equation*}
g(\phi Y,(\nabla _{X}^{\ast }\phi )Z)+g((\nabla _{X}\phi )Y,\phi Z)=\eta
(Z)g(\mathcal{A}^{\ast }X,Y)+\eta (Y)g(\mathcal{A}X,Z).
\end{equation*}%
Finally, by Lemma \ref{FUNDAMENTAL FORM}, we find (\ref{DF1}).

It is easily verified that 
\begin{equation*}
(\nabla _{X}^{\ast }\phi )\xi =\phi \mathcal{A}^{\ast }X
\end{equation*}%
for any $X\in \Gamma (TM).$ So we get%
\begin{eqnarray*}
(\nabla _{X}\Phi )(Y,\xi ) &=&g((\nabla _{X}\phi )Y,\xi )-2g(\mathcal{K}%
_{X}\phi Y,\xi ) \\
&=&-g(Y,(\nabla _{X}^{\ast }\phi )\xi )-2g(\phi Y,\mathcal{K}_{X}\xi ) \\
&=&-g(\phi \mathcal{A}^{\ast }X,Y)+2g(\mathcal{A}X,\phi Y)-2g(\mathcal{A}%
^{0}X,\phi Y) \\
&=&-g(\phi \mathcal{A}^{\ast }X,Y)+2g(\mathcal{A}X,\phi Y)-g(\mathcal{A}%
X,\phi Y)-g(\mathcal{A}^{\ast }X,\phi Y) \\
&=&g(\mathcal{A}X,\phi Y).
\end{eqnarray*}%
Replacing $Z$ by $\phi Z$ in (\ref{DF1}) and using foregoing equality, we
finally arrive at the (\ref{DF2}).
\end{proof}

Since an almost cosymplectic manifold is characterized by $\nabla ^{0}\phi
=0 $, by Corollary \ref{CONSTAT}\ \ we have

\begin{theorem}
Let ($M^{2n+1},g,\phi ,\nabla ,\nabla ^{\ast })$ be a cosymplectic
statistical manifold. Then%
\begin{equation}
(\nabla _{X}\phi )Y=(\mathcal{K}_{X}\phi )Y  \label{DAZIZ1}
\end{equation}%
\begin{equation}
(\nabla _{X}^{\ast }\phi )Y=-(\mathcal{K}_{X}\phi )Y  \label{DAZIZ2}
\end{equation}%
for any $X,Y\in \Gamma (TM).$
\end{theorem}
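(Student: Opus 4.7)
The plan is to observe that the statement is essentially an immediate consequence of Lemma \ref{CONSTAT} once the classical characterization of cosymplectic manifolds is invoked. A cosymplectic manifold is a normal almost cosymplectic manifold, and it is a well-known fact in the theory of almost contact metric structures (see \cite{Blair}) that normality together with $d\eta=0$ and $d\Phi=0$ is equivalent to $\nabla^{0}\phi=0$, i.e.\ the structure tensor $\phi$ is parallel with respect to the Levi-Civita connection of $g$.

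Once this is recalled, the argument reduces to a one-line substitution. Lemma \ref{CONSTAT} gives, for any almost contact statistical manifold,
\begin{equation*}
(\nabla _{X}\phi )Y=(\nabla _{X}^{0}\phi )Y+(\mathcal{K}_{X}\phi )Y,\qquad (\nabla _{X}^{\ast }\phi )Y=(\nabla _{X}^{0}\phi )Y-(\mathcal{K}_{X}\phi )Y.
\end{equation*}
Since the underlying structure is cosymplectic, $(\nabla _{X}^{0}\phi )Y=0$ for all $X,Y\in \Gamma(TM)$, so the first term on the right vanishes in both identities and we obtain precisely \eqref{DAZIZ1} and \eqref{DAZIZ2}.

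The only step that is not purely mechanical is recognizing that the hypothesis ``cosymplectic statistical manifold'' forces $\nabla^{0}\phi=0$ on the underlying almost contact metric structure; this is where the normality assumption (which distinguishes cosymplectic from merely almost cosymplectic) is used. After that, there is no genuine obstacle: the two equations follow by adding and subtracting the defining relations \eqref{AAB1} and \eqref{AAB2}, and no further curvature or tensor identity is required.
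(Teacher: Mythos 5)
Your proposal is correct and is essentially identical to the paper's argument: the paper likewise invokes the characterization $\nabla^{0}\phi=0$ of cosymplectic manifolds and then reads off \eqref{DAZIZ1} and \eqref{DAZIZ2} directly from the decompositions \eqref{AAB1} and \eqref{AAB2} of Lemma \ref{CONSTAT}. No difference in method or substance.
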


\begin{theorem}
Let ($M^{2n+1},g,\phi ,\nabla ,\nabla ^{\ast })$ be an almost contact
statistical manifold. Then ($M^{2n+1},g,\phi ,\nabla ,\nabla ^{\ast })$ be a
cosymplectic statistical manifold if and only if%
\begin{equation}
\nabla _{X}\phi Y-\phi \nabla _{X}^{\ast }Y=\mathcal{K}_{X}\phi Y+\phi 
\mathcal{K}_{X}Y  \label{DAZIZ3}
\end{equation}%
for any $X,Y\in \Gamma (TM).$
\end{theorem}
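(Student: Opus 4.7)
The plan is to reduce the displayed identity to the already-established characterization that an almost contact metric manifold is cosymplectic if and only if $\nabla^{0}\phi=0$, and then invoke Lemma \ref{CONSTAT} to translate between the covariant derivatives of $\phi$.

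First I would rewrite the left-hand side of (\ref{DAZIZ3}) using (\ref{K4}). Since $2\mathcal{K}_{X}Y=\nabla_{X}Y-\nabla_{X}^{\ast}Y$, applying $\phi$ yields $\phi\nabla_{X}^{\ast}Y=\phi\nabla_{X}Y-2\phi\mathcal{K}_{X}Y$. Substituting this into the left-hand side of (\ref{DAZIZ3}) gives
\begin{equation*}
\nabla_{X}\phi Y-\phi\nabla_{X}^{\ast}Y=(\nabla_{X}\phi)Y+2\phi\mathcal{K}_{X}Y.
\end{equation*}
Hence (\ref{DAZIZ3}) is equivalent to
\begin{equation*}
(\nabla_{X}\phi)Y=\mathcal{K}_{X}\phi Y-\phi\mathcal{K}_{X}Y=(\mathcal{K}_{X}\phi)Y,
\end{equation*}
i.e., purely a condition relating $\nabla\phi$ and $\mathcal{K}\phi$.

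Next I would apply (\ref{AAB1}) from Lemma \ref{CONSTAT}, which reads $(\nabla_{X}\phi)Y=(\nabla_{X}^{0}\phi)Y+(\mathcal{K}_{X}\phi)Y$. Combined with the rearrangement above, the identity (\ref{DAZIZ3}) is thus equivalent to $(\nabla_{X}^{0}\phi)Y=0$ for all $X,Y\in\Gamma(TM)$, that is, $\nabla^{0}\phi=0$. It is a classical fact for almost contact metric manifolds (recalled just before the previous theorem in the section) that $\nabla^{0}\phi=0$ characterizes the cosymplectic condition; consequently the two-sided implication follows.

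The only delicate point is really bookkeeping: making sure that each of the applied identities, (\ref{K4}) and (\ref{AAB1}), is invoked in the right direction so that both implications of the \textquotedblleft if and only if\textquotedblright\ are obtained simultaneously from the chain of equivalent identities. Since every step above is reversible, no extra argument is needed for the converse direction, and there is no genuine technical obstacle.
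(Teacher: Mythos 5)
Your argument is correct and is essentially the proof the paper intends (the paper states this theorem without written proof, immediately after deriving $(\nabla _{X}\phi )Y=(\mathcal{K}_{X}\phi )Y$ from $\nabla ^{0}\phi =0$ via Lemma \ref{CONSTAT}): your reduction of (\ref{DAZIZ3}) to $(\nabla _{X}\phi )Y=(\mathcal{K}_{X}\phi )Y$ via (\ref{K4}) and then to $\nabla ^{0}\phi =0$ via (\ref{AAB1}) is exactly the intended chain of equivalences. No gaps.
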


\begin{corollary}
($M^{2n+1},g,\phi ,\nabla ,\nabla ^{\ast })$ \ is a cosymplectic \
holomorphic statistical manifold if and only if $\ \nabla _{X}\phi Y=\phi
\nabla _{X}^{\ast }Y$ for any $X,Y\in \Gamma (TM).$
\end{corollary}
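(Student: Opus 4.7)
The plan is to deduce this corollary directly from the preceding theorem together with the definition of holomorphic statistical manifold (the Definition just before Corollary \ref{KAEHLER3}), applied to the skew-symmetric endomorphism $\psi=\phi$. Both parts of the hypothesis have already been recast as tensor identities: the cosymplectic statistical condition as (\ref{DAZIZ3}), namely
\begin{equation*}
\nabla_{X}\phi Y-\phi\nabla_{X}^{\ast}Y=\mathcal{K}_{X}\phi Y+\phi\mathcal{K}_{X}Y,
\end{equation*}
and the holomorphic statistical condition as $\mathcal{K}_{X}\phi Y+\phi\mathcal{K}_{X}Y=0$. The whole statement therefore reduces to combining these two identities, with no further computation needed.

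For the forward implication I would assume that $(M^{2n+1},g,\phi,\nabla,\nabla^{\ast})$ is simultaneously a cosymplectic statistical manifold and a holomorphic statistical manifold. Substituting the holomorphic identity into the right-hand side of (\ref{DAZIZ3}) kills it, leaving $\nabla_{X}\phi Y=\phi\nabla_{X}^{\ast}Y$. Conversely, starting from $\nabla_{X}\phi Y=\phi\nabla_{X}^{\ast}Y$ on a cosymplectic statistical manifold and feeding this into (\ref{DAZIZ3}), the left-hand side collapses to zero and one reads off $\mathcal{K}_{X}\phi Y+\phi\mathcal{K}_{X}Y=0$, which is precisely the defining holomorphic condition with $\psi=\phi$.

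The only delicate point is interpretive rather than computational: one must read ``cosymplectic holomorphic statistical manifold'' as a structure carrying both defining identities simultaneously, so that the characterization (\ref{DAZIZ3}) is in force on both sides of the equivalence. Once that convention is fixed, no further use of the endomorphisms $\mathcal{A}$, $\mathcal{A}^{\ast}$, of the Lie-derivative identities in Proposition \ref{Proposition AFI}, or of the second-fundamental-form formulas of Proposition \ref{SECOND FUNDFORM} is needed — the corollary is a one-line consequence of the preceding theorem.
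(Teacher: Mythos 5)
Your argument is correct and is exactly the (unstated) one the paper intends: the corollary is the immediate combination of the characterization (\ref{DAZIZ3}) with the defining identity $\mathcal{K}_{X}\phi Y+\phi \mathcal{K}_{X}Y=0$ of a holomorphic statistical structure, applied with $\psi =\phi $. Your interpretive caveat is also the right one to flag — read literally, the identity $\nabla _{X}\phi Y=\phi \nabla _{X}^{\ast }Y$ by itself only yields $(\nabla _{X}^{0}\phi )Y+\mathcal{K}_{X}\phi Y+\phi \mathcal{K}_{X}Y=0$ rather than the separate vanishing of $\nabla ^{0}\phi $ and of $\mathcal{K}_{X}\phi Y+\phi \mathcal{K}_{X}Y$, so the cosymplectic hypothesis must indeed be in force on both sides of the equivalence, as the paper evidently intends.
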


Let $t$ be the coordinate on $%
\mathbb{R}
.$ We denote by $\partial _{t}=\frac{\partial }{\partial _{t}}$ the unit
vector fileld on $%
\mathbb{R}
.$ Define affine connections $\nabla $ and $\nabla ^{\ast }$ on $%
\mathbb{R}
$ by 
\begin{equation}
^{%
\mathbb{R}
}\nabla _{\partial _{t}}\partial _{t}=\lambda (t)\partial _{t}\text{ \ and \ 
}^{%
\mathbb{R}
}\nabla _{\partial _{t}}^{\ast }\partial _{t}=\lambda ^{\ast }(t)\partial
_{t}=-\lambda (t)\partial _{t}\text{ ,}  \label{STAT2}
\end{equation}%
where $\lambda :%
\mathbb{R}
\rightarrow 
\mathbb{R}
$ is a smooth function. It is clear that $(g_{%
\mathbb{R}
}=dt^{2},^{%
\mathbb{R}
}\nabla ,^{%
\mathbb{R}
}\nabla ^{\ast })$ is a dualistic structure on $%
\mathbb{R}
.$

By \cite{TO}, the following proposition can be given.

\begin{proposition}[ \protect\cite{TO}]
\label{TO1} Let $(g_{N},^{N}\nabla ,^{N}\nabla ^{\ast })$ be dualistic
structures on $N$. \ Let us consider $(M=%
\mathbb{R}
\times N,<,>=$ $dt^{2}+g_{N})$ Riemannian product manifold . If $U,V$ $\ $%
are vector fields on $N$ and $\bar{\nabla},\bar{\nabla}^{\ast }$ satisfy
following relations on $%
\mathbb{R}
\times N:$

(a) $\bar{\nabla}_{\partial _{t}}\partial _{t}=$ $\lambda (t)\partial _{t}$

(b) $\bar{\nabla}_{_{\partial _{t}}}U=\bar{\nabla}_{U}\partial _{t}=0$

(c) $\bar{\nabla}_{U}V$ $=$ $\ ^{N}\nabla _{U}V$

and

(i) $\bar{\nabla}_{\partial _{t}}^{\ast }\partial _{t}=$ $-\lambda
(t)\partial _{t}$

(ii) $\bar{\nabla}_{\partial _{t}}^{\ast }U=\bar{\nabla}_{U}^{\ast }\partial
_{t}=0$

(iii) $\bar{\nabla}_{U}^{\ast }V$ $=$ $\ ^{N}\nabla _{U}^{\ast }V$

then $(<,>,\bar{\nabla},\bar{\nabla}^{\ast })$ is a dualistic structure on $%
M=%
\mathbb{R}
\times N.$
\end{proposition}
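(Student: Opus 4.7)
The plan is to verify that $(\langle,\rangle, \bar{\nabla}, \bar{\nabla}^{\ast})$ satisfies the conjugacy relation \eqref{STAT1}, that is
\begin{equation*}
Z\langle X,Y\rangle = \langle \bar{\nabla}_Z X, Y\rangle + \langle X, \bar{\nabla}_Z^{\ast} Y\rangle
\end{equation*}
for all $X,Y,Z\in\Gamma(TM)$ with $M=\mathbb{R}\times N$. Because both sides are tensorial in $Z$ and transform compatibly under multiplication of $X$ or $Y$ by a smooth function, it suffices to check the identity on a local frame, which I take to consist of $\partial_t$ together with horizontal lifts of vector fields $U,V,W$ on $N$. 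The verification then splits into cases according to how many of $X,Y,Z$ equal $\partial_t$.

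In every case where at least one of $X,Y,Z$ is an $N$-tangent vector field while at least one equals $\partial_t$, both sides reduce to $0$: the left-hand side vanishes because $\langle \partial_t, U\rangle = 0$ and because $\partial_t g_N(U,V) = 0$ (the lifts being $t$-independent), while the right-hand side vanishes by (b), (ii), and the orthogonality $\langle \partial_t, U\rangle=0$. In the pure-vertical case $X=Y=Z=\partial_t$, the left-hand side is $\partial_t(1)=0$ and the right-hand side equals $\langle \lambda\partial_t,\partial_t\rangle + \langle \partial_t,-\lambda\partial_t\rangle = \lambda-\lambda = 0$; this is exactly where the prescribed convention $\lambda^{\ast}(t)=-\lambda(t)$ becomes essential.

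There remains the purely horizontal case $Z=W$, $X=U$, $Y=V$. Applying (c) and (iii) this becomes
\begin{equation*}
W\,g_N(U,V) \;=\; g_N\bigl({}^N\nabla_W U,\,V\bigr) + g_N\bigl(U,\,{}^N\nabla_W^{\ast} V\bigr),
\end{equation*}
which is precisely the dualistic structure condition for $(g_N, {}^N\nabla, {}^N\nabla^{\ast})$ on $N$ and therefore holds by hypothesis. There is no genuine obstacle here; the proof is an organized case check, and the only design-level point worth emphasizing is that the rules (a)--(c) and (i)--(iii) do determine unique affine connections on the product, since a connection on $\mathbb{R}\times N$ is fixed once its action on the lifts of a local framing is specified.
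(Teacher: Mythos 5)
Your verification is correct and complete. The paper itself offers no proof of this proposition --- it is quoted from \cite{TO} --- so there is nothing to compare against; the direct case-check you give is the natural argument. Two small remarks: the reduction to a frame is legitimate because the defect $T(Z,X,Y)=Z\langle X,Y\rangle-\langle\bar{\nabla}_{Z}X,Y\rangle-\langle X,\bar{\nabla}_{Z}^{\ast }Y\rangle$ is $C^{\infty}$-trilinear (the derivative terms from the Leibniz rule cancel against the derivative of the metric coefficient), which is exactly the ``transform compatibly'' point you gesture at; and since the paper's notion of dualistic structure is built on torsion-free affine connections, a fully self-contained writeup would also record that $\bar{\nabla}$ and $\bar{\nabla}^{\ast}$ are torsion-free, which follows at once from (a)--(c), (i)--(iii), the torsion-freeness of $^{N}\nabla$, $^{N}\nabla^{\ast}$, and $[\partial _{t},U]=0$ for lifted $U$.
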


It is well known that a cosymplectic manifold is a locally product of an
open interval and a Kahlerian manifold \cite{DacOl1}. So we can give

\begin{theorem}
\label{YAZLA16}Let $(N,g,\nabla ,\nabla ^{\ast },J)$ be a Kaehler
statistical manifold and $(%
\mathbb{R}
,\nabla ^{%
\mathbb{R}
},\nabla ^{\ast 
\mathbb{R}
}dt)$ be statistical manifold. Under the Proposition \ref{TO1}, $%
\mathbb{R}
\times N$ is a cosymplectic statistical manifold.
\end{theorem}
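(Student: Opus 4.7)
The plan is to equip $M = \mathbb{R} \times N$ with the standard product almost contact structure $(\phi, \xi, \eta)$ defined by $\xi = \partial_t$, $\eta = dt$ and $\phi(U + f\partial_t) = JU$ for $U \in \Gamma(TN)$, together with the product metric $\langle\,\cdot\,,\,\cdot\,\rangle = dt^2 + g_N$ and the dualistic structure $(\bar{\nabla}, \bar{\nabla}^\ast)$ supplied by Proposition~\ref{TO1}. Immediately $\phi^2 = -I+\eta\otimes\xi$, $\eta(\xi)=1$, and $\langle\phi X,\phi Y\rangle = \langle X,Y\rangle-\eta(X)\eta(Y)$, so $M$ is an almost contact metric statistical manifold. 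Writing $\pi : M \to N$ for the projection, $\Phi = \pi^\ast \Omega_N$, so $d\Phi = \pi^\ast d\Omega_N = 0$ by the Kaehler hypothesis on $N$; $d\eta = 0$ and $N^{(1)} = 0$ are also immediate (the latter reducing to the integrability of $J$). Hence the underlying almost contact metric structure is cosymplectic.

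Next I would compute the product difference tensor $\bar{\mathcal{K}} = \bar{\nabla}-\bar{\nabla}^0$. Since $2\bar{\nabla}^0 = \bar{\nabla}+\bar{\nabla}^\ast$ and $\lambda^\ast = -\lambda$ by Proposition~\ref{TO1}, $\bar{\nabla}^0$ coincides with the product Levi-Civita connection, so $\bar{\mathcal{K}}$ vanishes whenever $\partial_t$ appears in either slot except that $\bar{\mathcal{K}}_{\partial_t}\partial_t = \lambda(t)\partial_t$, while $\bar{\mathcal{K}}_U V = \mathcal{K}^N_U V$ for $U, V \in \Gamma(TN)$. To upgrade the structure to a cosymplectic statistical one, I appeal to the iff characterization~(\ref{DAZIZ3}), which reduces the task to verifying
\begin{equation*}
\bar{\nabla}_X \phi Y - \phi \bar{\nabla}^{\ast}_X Y = \bar{\mathcal{K}}_X \phi Y + \phi \bar{\mathcal{K}}_X Y
\end{equation*}
for all $X, Y \in \Gamma(TM)$. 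A case analysis on whether each of $X, Y$ equals $\partial_t$ or lies in $\Gamma(TN)$ handles everything: whenever $\partial_t$ appears both sides vanish, because $\phi\partial_t = 0$ and $\bar{\nabla}$, $\bar{\nabla}^{\ast}$, $\bar{\mathcal{K}}$ all vanish on mixed arguments; in the remaining case $X = U$, $Y = V \in \Gamma(TN)$ the identity simplifies algebraically (using $\nabla^{\ast,N} + \mathcal{K}^N = \nabla^{0,N}$) to $(\nabla^{0,N}_U J) V = 0$, which holds since $N$ is Kaehler.

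The main obstacle is essentially bookkeeping: one must confirm that the $\lambda(t)\partial_t$ terms produced by the non-trivial connections along the $\mathbb{R}$-factor are killed by $\phi$ on both sides, and that the mixed vanishing dictated by Proposition~\ref{TO1} makes every case involving $\partial_t$ trivial. Once this is in place, the entire verification collapses to the Kaehler identity $\nabla^{0,N} J = 0$ on $N$, which is part of the standing hypothesis, and the theorem follows.
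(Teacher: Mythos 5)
Your proof is correct and takes essentially the same route as the paper, which offers no detailed argument beyond invoking the classical fact that a cosymplectic manifold is locally the product of an interval with a Kaehler manifold together with the dualistic product structure of Proposition \ref{TO1}. You simply make explicit the case-by-case verification (reducing everything to $\nabla^{0,N}J=0$ via the characterization (\ref{DAZIZ3})) that the paper leaves implicit.
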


\begin{example}[ \protect\cite{DACKO}]
\label{DACKO1}If \ a group operation in $%
\mathbb{R}
^{3}$ is defined as 
\begin{equation*}
(t,x,y)\ast (s,u,v)=(t+s,e^{-t}u+x,e^{t}v+y)
\end{equation*}%
for any $(t,x,y),(s,u,v)\in $ $%
\mathbb{R}
^{3}$ then $(%
\mathbb{R}
^{3},\ast )$ is a Lie group which is called the solvable non-nilpotent Lie
group. The following set of left-invariant vector fields forms an
orthonormal basis for the corresponding Lie algebra:%
\begin{equation*}
E_{0}=\frac{\partial }{\partial t},E_{1}=e^{-t}\frac{\partial }{\partial x}%
,E_{2}=e^{t}\frac{\partial }{\partial y}.
\end{equation*}%
According to this base , one can construct almost contact metric structure $%
(\phi ,\xi ,\eta ,g)$ on $%
\mathbb{R}
^{3}$ as follows:%
\begin{eqnarray}
\eta  &=&dt,\xi =E_{0},\phi =e^{2t}dx\otimes \frac{\partial }{\partial y}%
-e^{-2t}dy\otimes \frac{\partial }{\partial x},  \label{SEFADIYE1} \\
\text{ \ \ \ }g &=&dt\otimes dt+e^{2t}dx\otimes dx+e^{-2t}dy\otimes dy. 
\notag
\end{eqnarray}%
We obviously get $(%
\mathbb{R}
^{3},\phi ,\xi ,\eta ,g)$ is an almost cosymplectic manifold.
\end{example}

With respect to Example \ref{DACKO1}, we provide an example on almost
cosymplectic statistical manifold on $%
\mathbb{R}
^{3}.$

\begin{example}
\label{YAZLA}Consider Example \ref{DACKO1} for almost cosymplectic
statistical manifolds . By (\ref{SEFADIYE1}) and Koszula formula, we can now
proceed to calculate the Levi-Civita connections 
\begin{equation}
\begin{array}{lll}
\nabla _{E_{1}}^{0}E_{1}=-E_{0}, & \nabla _{E_{2}}^{0}E_{1}=0, & \nabla
_{E_{0}}^{0}E_{1}=0, \\ 
\nabla _{E_{1}}^{0}E_{2}=0, & \nabla _{E_{2}}^{0}E_{2}=E_{0}, & \nabla
_{E_{0}}^{0}E_{2}=0, \\ 
\nabla _{E_{1}}^{0}E_{0}=E_{1}, & \nabla _{E_{2}}^{0}E_{0}=-E_{2}, & \nabla
_{E_{0}}^{0}E_{0}=0.%
\end{array}
\label{OZCAN1}
\end{equation}%
Now we define torsion-free affine connections $\nabla ,\nabla ^{\ast }$ as
follows 
\begin{equation}
\begin{array}{lll}
\nabla _{E_{1}}E_{1}=-E_{0}+E_{2}, & \nabla _{E_{2}}E_{1}=E_{1}+E_{0}, & 
\nabla _{E_{0}}E_{1}=E_{2}, \\ 
\nabla _{E_{1}}E_{2}=E_{1}+E_{0}, & \nabla _{E_{2}}E_{2}=E_{0}+E_{2}, & 
\nabla _{E_{0}}E_{2}=E_{1}, \\ 
\nabla _{E_{1}}E_{0}=E_{1}+E_{2}, & \nabla _{E_{2}}E_{0}=-E_{2}+E_{1}, & 
\nabla _{E_{0}}E_{0}=E_{0}.%
\end{array}
\label{ONUR1}
\end{equation}%
$\ \ $
\end{example}

$\ \ \ \ \ \ \ \ \ \ \ \ \ \ \ \ \ \ \ \ \ \ \ \ \ \ \ \ \ \ \ \ \ \ \ \ \ \
\ \ \ \ \ \ \ \ \ \ \ \ \ \ \ \ \ \ \ \ \ \ \ \ \ \ \ \ \ \ \ \ \ \ \ \ \ \
\ \ \ \ \ \ \ \ \ \ \ \ \ \ \ \ \ \ \ \ \ \ \ \ \ \ \ \ \ \ \ \ \ \ \ \ \ \
\ \ \ \ \ \ \ \ \ \ \ $%
\begin{equation}
\begin{array}{lll}
\nabla _{E_{1}}^{\ast }E_{1}=-E_{0}-E_{2}, & \nabla _{E_{2}}^{\ast
}E_{1}=-E_{1}-E_{0}, & \nabla _{E_{0}}^{\ast }E_{1}=-E_{2}, \\ 
\nabla _{E_{1}}^{\ast }E_{2}=-E_{1}-E_{0}, & \nabla _{E_{2}}^{\ast
}E_{2}=E_{0}-E_{2}, & \nabla _{E_{0}}^{\ast }E_{2}=-E_{1}, \\ 
\nabla _{E_{1}}^{\ast }E_{0}=E_{1}-E_{2}, & \nabla _{E_{2}}^{\ast
}E_{0}=-E_{2}-E_{1}, & \nabla _{E_{0}}^{\ast }E_{0}=-E_{0}.%
\end{array}
\label{ONUR2}
\end{equation}%
\textit{where} 
\begin{equation}
\begin{array}{lll}
\mathcal{K}_{E_{1}}E_{1}=E_{2}, & \mathcal{K}_{E_{2}}E_{1}=E_{1}+E_{0} & 
\mathcal{K}_{E_{0}}E_{1}=E_{2}, \\ 
\mathcal{K}_{E_{1}}E_{2}=E_{1}+E_{0}, & \mathcal{K}_{E_{2}}E_{2}=E_{2}, & 
\mathcal{K}_{E_{0}}E_{2}=E_{1}, \\ 
\mathcal{K}_{E_{1}}E_{0}=E_{2}, & \mathcal{K}_{E_{2}}E_{0}=E_{1}, & \mathcal{%
K}_{E_{0}}E_{0}=E_{0}.%
\end{array}
\label{ONUR22}
\end{equation}%
\textit{Hence we have }

\begin{equation*}
Zg(X,Y)=g(\nabla _{Z}X,Y)+g(X,\nabla _{Z}^{\ast }Y)
\end{equation*}%
\textit{for any }$X,Y,Z\in \Gamma (TM).$\textit{\ It means that }$(%
\mathbb{R}
^{3},g,\nabla ,\nabla ^{\ast },\phi )$\textit{\ is an almost cosymplectic
statistical manifold. We notice that }$\mathcal{A}E_{0}$ and $\mathcal{A}%
^{\ast }E_{0}$ \textit{are different from zero.}

\textit{If torsion-free affine connections} $\nabla ,\nabla ^{\ast }$ 
\textit{satisfy the followings}

\begin{equation*}
\begin{array}{lll}
\nabla _{E_{1}}E_{1}=-E_{0}+E_{2}, & \nabla _{E_{2}}E_{1}=E_{1}, & \nabla
_{E_{0}}E_{1}=0, \\ 
\nabla _{E_{1}}E_{2}=E_{1}, & \nabla _{E_{2}}E_{2}=E_{0}+E_{2}, & \nabla
_{E_{0}}E_{2}=0, \\ 
\nabla _{E_{1}}E_{0}=E_{1}, & \nabla _{E_{2}}E_{0}=-E_{2}, & \nabla
_{E_{0}}E_{0}=0.%
\end{array}%
\end{equation*}%
$\ \ $

$\ \ \ \ \ \ \ \ \ \ \ \ \ \ \ \ \ \ \ \ \ \ \ \ \ \ \ \ \ \ \ \ \ \ \ \ \ \
\ \ \ \ \ \ \ \ \ \ \ \ \ \ \ \ \ \ \ \ \ \ \ \ \ \ \ \ \ \ \ \ \ \ \ \ \ \
\ \ \ \ \ \ \ \ \ \ \ \ \ \ \ \ \ \ \ \ \ \ \ \ \ \ \ \ \ \ \ \ \ \ \ \ \ \
\ \ \ \ \ \ \ \ \ \ \ $%
\begin{equation*}
\begin{array}{lll}
\nabla _{E_{1}}^{\ast }E_{1}=-E_{0}-E_{2}, & \nabla _{E_{2}}^{\ast
}E_{1}=-E_{1}, & \nabla _{E_{0}}^{\ast }E_{1}=0, \\ 
\nabla _{E_{1}}^{\ast }E_{2}=-E_{1}, & \nabla _{E_{2}}^{\ast
}E_{2}=E_{0}-E_{2}, & \nabla _{E_{0}}^{\ast }E_{2}=0, \\ 
\nabla _{E_{1}}^{\ast }E_{0}=E_{1}, & \nabla _{E_{2}}^{\ast }E_{0}=-E_{2}, & 
\nabla _{E_{0}}^{\ast }E_{0}=0.%
\end{array}%
\end{equation*}%
\textit{then} $(%
\mathbb{R}
^{3},g,\nabla ,\nabla ^{\ast },\phi )$\textit{\ is again an almost
cosymplectic statistical manifold. In this case }$\mathcal{A}=$ $\mathcal{A}%
^{\ast }=$ $\mathcal{A}^{0}$ \textit{and \ also the integral curves of} $%
E_{0}=\xi $\textit{\ are geodesics respect to affine connections} $\nabla $,$%
\nabla ^{\ast }$.

$\ \ \ \ \ \ \ \ \ \ \ \ \ \ \ \ \ \ \ \ \ \ \ \ \ \ \ \ \ \ \ \ \ \ \ \ \ \
\ \ \ \ \ \ \ \ \ \ \ \ \ \ \ \ \ \ \ \ \ \ \ \ \ \ \ \ \ \ \ \ \ \ \ \ \ \
\ \ \ \ \ \ \ \ \ \ \ \ \ \ \ \ \ \ \ \ \ \ \ \ \ \ \ \ \ \ \ \ \ \ \ \ \ \
\ \ \ \ \ \ \ \ \ \ \ \ \ \ \ \ \ \ \ \ \ \ \ \ \ \ \ \ \ \ \ \ \ \ \ \ \ \
\ \ \ \ \ \ \ \ \ \ \ \ \ \ \ \ \ \ \ \ \ \ \ \ \ \ \ \ \ \ \ \ \ \ \ \ \ \
\ \ \ \ \ \ \ \ \ \ \ \ \ \ \ \ \ \ \ \ \ \ \ \ \ \ \ \ \ \ \ \ \ \ \ \ \ \
\ \ \ \ \ \ \ \ \ \ \ \ \ \ \ \ \ \ \ \ \ \ \ \ \ \ \ \ \ \ \ \ \ \ \ $

Now we will give an application for Equations (\ref{AAB1}) and (\ref{AAB2}).
Let $M^{2n+1}=(M,\phi ,\xi ,\eta ,g)$ be an almost cosymplectic statistical
manifold. By the definition, the form $\eta $ is closed, therefore
distribution $\mathcal{D}:\eta =0$ is completely integrable. Each leaf of
the foliation, determined by $\mathcal{D}$, carries an almost Kaehler
structure $(J,<,>)$%
\begin{equation*}
J\bar{X}=\phi \bar{X},\text{ \ \ \ \ }\left\langle \bar{X},\bar{Y}%
\right\rangle =g(\bar{X},\bar{Y}),
\end{equation*}%
$\bar{X},\bar{Y}$ are vector fields tangent to the leaf. If this structure
is Kaehler statistical, leaf is called a Kaehler statistical leaf. Other
hand Dacko and Olszak \cite{DacOl1},\cite{OlK} announced that an almost
cosymplectic manifold has Kaehler leaves if and only if 
\begin{equation*}
(\nabla _{X}^{0}\phi )Y=g(\mathcal{A}^{0}X,\phi Y)\xi +\eta (Y)\phi \mathcal{%
A}^{0}X,\text{ \ \ \ }\mathcal{A}^{0}=-\nabla ^{0}\xi .
\end{equation*}%
Using the last equation in (\ref{AAB1}) and (\ref{AAB2}), we have following.

\begin{theorem}
An almost cosymplectic statistical manifold has Kaehler statistical leaves
if and only if 
\begin{eqnarray*}
(\nabla _{X}\phi )Y &=&(\mathcal{K}_{X}\phi )Y+g(\mathcal{A}^{0}X,\phi Y)\xi
+\eta (Y)\phi \mathcal{A}^{0}X, \\
(\nabla _{X}^{\ast }\phi )Y &=&-(\mathcal{K}_{X}\phi )Y+g(\mathcal{A}%
^{0}X,\phi Y)\xi +\eta (Y)\phi \mathcal{A}^{0}X.
\end{eqnarray*}
\end{theorem}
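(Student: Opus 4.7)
The plan is to reduce the theorem---via Lemma \ref{CONSTAT}---to the Dacko--Olszak characterization already quoted in the excerpt, namely that an almost cosymplectic manifold has Kaehler leaves if and only if
\begin{equation*}
(\nabla_{X}^{0}\phi)Y = g(\mathcal{A}^{0}X,\phi Y)\xi + \eta(Y)\phi\mathcal{A}^{0}X. \qquad (\star)
\end{equation*}
Both target identities are just the statistical ``splittings'' of $(\star)$ with respect to $\nabla$ and $\nabla^{\ast}$.

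For the forward direction I would argue directly. Assume the ambient manifold has Kaehler statistical leaves; then in particular its underlying almost cosymplectic structure has Kaehler leaves, so $(\star)$ holds on $M$ by Dacko--Olszak. Substituting $(\star)$ into the right-hand sides of (\ref{AAB1}) and (\ref{AAB2}) reproduces exactly the two identities claimed in the theorem, with no further work.

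For the converse I would add the two assumed identities. Since adding (\ref{AAB1}) and (\ref{AAB2}) yields $(\nabla_{X}\phi)Y + (\nabla_{X}^{\ast}\phi)Y = 2(\nabla_{X}^{0}\phi)Y$, the $\pm(\mathcal{K}_{X}\phi)Y$ contributions cancel on the right-hand side and $(\star)$ drops out. (Observe in passing that by subtracting (\ref{AAB1}) from the first assumed identity, or (\ref{AAB2}) from the second, \emph{either} assumed identity alone is equivalent to $(\star)$; the symmetric pair appearing in the statement encodes the same information.) Invoking Dacko--Olszak in the reverse direction then gives Kaehler leaves.

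The main obstacle, such as it is, lies in upgrading ``Kaehler leaves'' to ``Kaehler \emph{statistical} leaves''. One must verify that restricting $g$ to each leaf of $\mathcal{D}\colon\eta=0$ and taking the tangential components of $\nabla$ and $\nabla^{\ast}$ produces a genuine statistical structure compatible with the induced almost complex structure $J\bar X = \phi\bar X$. Here $d\eta=0$ gives integrability of $\mathcal{D}$ and keeps the induced connections torsion-free, while the fact that $\xi$ is $g$-orthogonal to every leaf lets the duality (\ref{STAT1}) descend to the tangential parts; the Kaehler property for the induced Levi-Civita connection follows by restricting $(\star)$ to horizontal arguments. Beyond this step, every calculation is algebraic manipulation already packaged in the earlier lemmas, so no genuinely new input is required past Dacko--Olszak and Lemma \ref{CONSTAT}.
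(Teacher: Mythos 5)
Your proposal is correct and takes essentially the same route as the paper, which proves the theorem simply by substituting the Dacko--Olszak condition $(\nabla _{X}^{0}\phi )Y=g(\mathcal{A}^{0}X,\phi Y)\xi +\eta (Y)\phi \mathcal{A}^{0}X$ into the decompositions (\ref{AAB1}) and (\ref{AAB2}) of Lemma \ref{CONSTAT}. Your extra remarks --- that the converse follows by adding (or individually comparing) the two identities, and that one should check the induced structure on each leaf of $\eta =0$ is genuinely statistical --- merely make explicit details the paper leaves implicit.
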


The (1,1) tensor field $\mathcal{K}\circ \phi +\phi \circ \mathcal{K}$ is
important to define hololmorphic statistical manifolds. So above theorem can
be given following.

\begin{corollary}
An almost cosymplectic statistical manifold has\ Kaehler statistical leaves
if and only if%
\begin{equation}
\nabla _{X}\phi Y-\phi \nabla _{X}^{\ast }Y=g(\mathcal{A}^{0}X,\phi Y)\xi
+\eta (Y)\phi \mathcal{A}^{0}X+\mathcal{K}_{X}\phi Y+\phi \mathcal{K}_{X}Y
\label{O1}
\end{equation}%
for any $X,Y\in \Gamma (TM)$ .
\end{corollary}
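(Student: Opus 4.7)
My plan is to derive the corollary directly from the preceding theorem by rewriting everything in terms of $\nabla$ and $\nabla^{\ast}$ instead of the "covariant-derivative-of-$\phi$" notation, using the standard decomposition $\nabla = \nabla^{0}+\mathcal{K}$ and $\nabla^{\ast} = \nabla^{0}-\mathcal{K}$ from (\ref{K1}) and (\ref{K3}).

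For the forward direction, I would begin with the first equation of the theorem, $(\nabla_{X}\phi)Y=(\mathcal{K}_{X}\phi)Y+g(\mathcal{A}^{0}X,\phi Y)\xi+\eta(Y)\phi\mathcal{A}^{0}X$, and expand both sides as $\nabla_{X}\phi Y-\phi\nabla_{X}Y = \mathcal{K}_{X}\phi Y-\phi\mathcal{K}_{X}Y+g(\mathcal{A}^{0}X,\phi Y)\xi+\eta(Y)\phi\mathcal{A}^{0}X$. Then I would convert $\phi\nabla_{X}Y$ into $\phi\nabla_{X}^{\ast}Y$ by adding $\phi\nabla_{X}Y-\phi\nabla_{X}^{\ast}Y = 2\phi\mathcal{K}_{X}Y$ (which follows from (\ref{K4})) to both sides. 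The two $\phi\mathcal{K}_{X}Y$ contributions combine to give $+\phi\mathcal{K}_{X}Y$, producing exactly (\ref{O1}).

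For the converse, I would reverse the bookkeeping. Writing $\nabla_{X}=\nabla_{X}^{0}+\mathcal{K}_{X}$ and $\nabla_{X}^{\ast}=\nabla_{X}^{0}-\mathcal{K}_{X}$, the left-hand side of (\ref{O1}) becomes
\begin{equation*}
\nabla_{X}\phi Y-\phi\nabla_{X}^{\ast}Y = (\nabla_{X}^{0}\phi)Y+\mathcal{K}_{X}\phi Y+\phi\mathcal{K}_{X}Y.
\end{equation*}
Substituting into (\ref{O1}) and cancelling the $\mathcal{K}_{X}\phi Y+\phi\mathcal{K}_{X}Y$ terms on both sides collapses the identity to $(\nabla_{X}^{0}\phi)Y=g(\mathcal{A}^{0}X,\phi Y)\xi+\eta(Y)\phi\mathcal{A}^{0}X$, which is precisely the Dacko--Olszak characterization cited in the paragraph preceding the theorem, so the leaves are Kaehler. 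Alternatively, this same cancellation recovers both equations of the theorem at once: adding $\mathcal{K}_{X}\phi Y\mp\phi\mathcal{K}_{X}Y$ appropriately reproduces the $(\nabla_{X}\phi)Y$ and $(\nabla_{X}^{\ast}\phi)Y$ formulas.

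There is essentially no obstacle: the content of the corollary is just a compact repackaging of the theorem, exploiting the fact that the combination $\nabla_{X}\phi Y-\phi\nabla_{X}^{\ast}Y$ mixes the $\mathcal{K}\phi+\phi\mathcal{K}$ tensor (which is the key quantity that distinguishes the holomorphic statistical case, as emphasized in the remark before the corollary) with the Levi-Civita obstruction $(\nabla^{0}\phi)Y$ in a single clean expression. The only care needed is to keep track of signs in the $\mathcal{K}$-decomposition.
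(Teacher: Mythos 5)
Your proposal is correct and follows the same route the paper implicitly intends: the corollary is obtained from the preceding theorem by expanding $(\nabla_X\phi)Y$ and $(\mathcal{K}_X\phi)Y$ and converting $\phi\nabla_X Y$ to $\phi\nabla_X^{\ast}Y$ via $2\mathcal{K}_X Y=\nabla_X Y-\nabla_X^{\ast}Y$, and conversely the identity $\nabla_X\phi Y-\phi\nabla_X^{\ast}Y=(\nabla_X^{0}\phi)Y+\mathcal{K}_X\phi Y+\phi\mathcal{K}_X Y$ reduces (\ref{O1}) to the Dacko--Olszak condition. The sign bookkeeping in both directions checks out.
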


\section{Curvature properties on almost cosymplectic statistical manifolds}

In this section, we study curvature properties of an almost cosymplectic
statistical manifold\textit{.} By simple computations, we have the following
theorem.

\begin{theorem}
\label{b30} Let ($M^{2n+1},\phi ,\xi ,\eta ,g)$ be an \textit{almost
cosymplectic statistical manifold.} Then, for any $X,Y\in \Gamma
(TM^{2n+1}), $%
\begin{eqnarray}
R(X,Y)\xi &=&(\nabla _{X}\mathcal{A})Y-(\nabla _{Y}\mathcal{A})X,  \label{R0}
\\
R^{\ast }(X,Y)\xi &=&(\nabla _{X}^{\ast }\mathcal{A}^{\ast })Y-(\nabla
_{Y}^{\ast }\mathcal{A}^{\ast })X.  \label{R00}
\end{eqnarray}
\end{theorem}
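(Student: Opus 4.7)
The plan is to derive both equations by direct computation from the definition of the curvature tensor, exploiting the fact that $\nabla$ and $\nabla^{\ast}$ are both torsion-free affine connections on a statistical manifold.

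For the first identity, I would start from
\[
R(X,Y)\xi = \nabla_{X}\nabla_{Y}\xi - \nabla_{Y}\nabla_{X}\xi - \nabla_{[X,Y]}\xi,
\]
and substitute $\nabla_{Z}\xi = -\mathcal{A}Z$ from the defining relation \eqref{AA1}. The middle term $\nabla_{X}(\mathcal{A}Y)$ splits via the tensorial Leibniz rule as
\[
\nabla_{X}(\mathcal{A}Y) = (\nabla_{X}\mathcal{A})Y + \mathcal{A}(\nabla_{X}Y),
\]
and analogously for $\nabla_{Y}(\mathcal{A}X)$. Collecting everything, the curvature term becomes
\[
\pm\bigl[(\nabla_{X}\mathcal{A})Y - (\nabla_{Y}\mathcal{A})X\bigr] \;-\; \mathcal{A}\bigl(\nabla_{X}Y - \nabla_{Y}X - [X,Y]\bigr).
\]
The second bracket vanishes identically since $\nabla$ is torsion-free (a standing hypothesis in the definition of a statistical manifold, used to derive \eqref{K2}), producing exactly \eqref{R0}.

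For the dual identity \eqref{R00}, I would run the identical calculation with $\nabla^{\ast}$ in place of $\nabla$ and $\mathcal{A}^{\ast}$ in place of $\mathcal{A}$. The only property of the connection used is torsion-freeness, and $\nabla^{\ast}$ is torsion-free by the statistical manifold hypothesis; the relation $\nabla^{\ast}_{Z}\xi = -\mathcal{A}^{\ast}Z$ is the $\ast$-analog of \eqref{AA1}. Hence the argument is entirely parallel, and no interaction between $\nabla$ and $\nabla^{\ast}$ (via $\mathcal{K}$ or the duality relation \eqref{STAT1}) needs to be invoked.

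There is essentially no genuine obstacle here; the proof is a formal verification. The only care required is bookkeeping of signs in expanding $\nabla_{X}(\mathcal{A}Y)$ versus $\nabla_{X}\nabla_{Y}\xi$ and matching the sign convention used for the curvature operator. Notably, the almost cosymplectic conditions $d\eta = 0$ and $d\Phi = 0$ are not invoked in this particular calculation — the identities hold for any $\xi$ on any statistical manifold once $\mathcal{A}$ and $\mathcal{A}^{\ast}$ are defined by \eqref{AA1} — so the statement is really a general Ricci-type identity specialized to the Reeb vector field.
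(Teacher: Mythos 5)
Your proof is correct and is precisely the ``simple computation'' the paper omits: expand $R(X,Y)\xi$ by the definition of curvature, substitute $\nabla _{Z}\xi =-\mathcal{A}Z$, apply the Leibniz rule, and cancel the remaining terms using torsion-freeness of $\nabla $ (and likewise for $\nabla ^{\ast }$), with the almost cosymplectic hypothesis playing no role. The one point to watch, which you already flag, is that with the convention $R(X,Y)=[\nabla _{X},\nabla _{Y}]-\nabla _{[X,Y]}$ the computation actually yields $(\nabla _{Y}\mathcal{A})X-(\nabla _{X}\mathcal{A})Y$, so the sign as printed in \eqref{R0} presupposes the opposite curvature convention (which the paper never states).
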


We define $(1,1)$ tensor fields\ $h^{0},h$ and \ $h^{\ast }$ on $M^{2n+1}$
by 
\begin{eqnarray}
h^{0} &=&\frac{1}{2}(\mathcal{L}_{\xi }\phi )  \label{R01} \\
h &=&\frac{1}{2}(\mathcal{A}\phi -\phi \mathcal{A}),h^{\ast }=\frac{1}{2}(%
\mathcal{A}^{\ast }\phi -\phi \mathcal{A}^{\ast }),  \label{R02}
\end{eqnarray}%
respevtively. It is proved that $h^{0}$ is symmetric and $h^{0}=$ $\mathcal{A%
}^{0}\phi $ in \cite{DacOl1}. In the following proposition we establish some
properties of the tensor fields $h$ and $h^{\ast }$ and provide a relation
between $h^{0},h$ and $h^{\ast }.$

\begin{proposition}
Let ($M^{2n+1},\phi ,\xi ,\eta ,g)$ be an \textit{almost cosymplectic
statistical manifold.} Then 
\begin{equation}
g(hX,Y)=g(X,hY)\text{ and }g(h^{\ast }X,Y)=g(X,h^{\ast }Y),  \label{R03}
\end{equation}%
\begin{equation}
h^{0}X=hX+\frac{1}{2}(\mathcal{K}_{\zeta }\phi )X\text{ and }h^{0}X=h^{\ast
}X-\frac{1}{2}(\mathcal{K}_{\zeta }\phi )X.  \label{R04}
\end{equation}
\end{proposition}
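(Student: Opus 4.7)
The plan is to prove the two symmetry statements first and then the two relations linking $h^{0}$, $h$ and $h^{\ast}$ via the Lie derivative $\mathcal{L}_{\xi}\phi$.

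For the symmetry $g(hX,Y)=g(X,hY)$, I would simply expand
$$2g(hX,Y)=g(\mathcal{A}\phi X,Y)-g(\phi\mathcal{A}X,Y),$$
then apply the self-adjointness $g(\mathcal{A}U,V)=g(U,\mathcal{A}V)$ (Proposition \ref{Proposition AFI}, $ii$) together with the skew-adjointness $g(\phi U,V)=-g(U,\phi V)$. Each term migrates to a term on the right-hand side with the opposite sign, yielding $g(X,\mathcal{A}\phi Y-\phi\mathcal{A}Y)=2g(X,hY)$. The proof for $h^{\ast}$ is identical, this time invoking part $iii$ of Proposition \ref{Proposition AFI}.

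For the two identities in (\ref{R04}), the key tool is the Cartan-type expansion of the Lie bracket by a torsion-free affine connection. Since $\nabla$ is torsion free, $[\xi,Z]=\nabla_{\xi}Z-\nabla_{Z}\xi=\nabla_{\xi}Z+\mathcal{A}Z$, so
$$(\mathcal{L}_{\xi}\phi)X=[\xi,\phi X]-\phi[\xi,X]=(\nabla_{\xi}\phi)X+\mathcal{A}\phi X-\phi\mathcal{A}X=(\nabla_{\xi}\phi)X+2hX.$$
Now I apply Lemma \ref{CONSTAT} to get $(\nabla_{\xi}\phi)X=(\nabla^{0}_{\xi}\phi)X+(\mathcal{K}_{\xi}\phi)X$, and I use the almost-cosymplectic fact $(\nabla^{0}_{\xi}\phi)X=0$. (This last fact is implicit in the paper: it follows by performing exactly the computation above with $\nabla^{0}$ in place of $\nabla$, which gives $2h^{0}X=(\nabla^{0}_{\xi}\phi)X+\mathcal{A}^{0}\phi X-\phi\mathcal{A}^{0}X$, and then using the almost-cosymplectic identity $\mathcal{A}^{0}\phi+\phi\mathcal{A}^{0}=0$ already quoted in the proof of Proposition \ref{Proposition AFI}, together with $h^{0}=\mathcal{A}^{0}\phi$.) Substituting back,
$$2h^{0}X=(\mathcal{L}_{\xi}\phi)X=(\mathcal{K}_{\xi}\phi)X+2hX,$$
which yields the first equality in (\ref{R04}). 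Repeating the same computation with $\nabla^{\ast}$ in place of $\nabla$, and using $(\nabla^{\ast}_{\xi}\phi)X=(\nabla^{0}_{\xi}\phi)X-(\mathcal{K}_{\xi}\phi)X=-(\mathcal{K}_{\xi}\phi)X$ from Lemma \ref{CONSTAT}, gives the second equality.

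The only mildly subtle point, and the one I would be careful about, is justifying $(\nabla^{0}_{\xi}\phi)=0$ on an almost cosymplectic manifold; everything else is a direct algebraic manipulation. I would present the Riemannian computation in one line to make the write-up self-contained, relying on the already-quoted identity $\mathcal{A}^{0}\phi+\phi\mathcal{A}^{0}=0$ rather than invoking external references. Note also the minor typo $\mathcal{K}_{\zeta}$ in the statement, which I read as $\mathcal{K}_{\xi}$.
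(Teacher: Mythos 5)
Your proof is correct and follows essentially the same route as the paper: expand $(\mathcal{L}_{\xi}\phi)X$ via the torsion-free connections to get $(\mathcal{L}_{\xi}\phi)X=(\mathcal{K}_{\xi}\phi)X+2hX=-(\mathcal{K}_{\xi}\phi)X+2h^{\ast}X$, and combine with $h+h^{\ast}=2h^{0}$. You are in fact slightly more careful than the paper, which silently uses $(\nabla^{0}_{\xi}\phi)=0$ in its invocation of (\ref{AAB1}); your one-line justification of that fact from $\mathcal{A}^{0}\phi+\phi\mathcal{A}^{0}=0$ and $h^{0}=\mathcal{A}^{0}\phi$ is a worthwhile addition.
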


\begin{proof}
Using the antisymmetry of $\phi $ and the symmetry of $A,A^{\ast },$ we have
the equation (\ref{R03}). By direct computations we obtain 
\begin{eqnarray*}
(\mathcal{L}_{\zeta }\phi )X &=&(\nabla _{\zeta }\phi )X+\mathcal{A}\phi
X-\phi \mathcal{A}X \\
&=&(\nabla _{\zeta }\phi )X+2hX \\
&&\overset{(\ref{AAB1})}{=}(\mathcal{K}_{\xi }\phi )X+2hX
\end{eqnarray*}%
and%
\begin{eqnarray*}
(\mathcal{L}_{\zeta }\varphi )X &=&(\nabla _{\zeta }^{\ast }\varphi
)X+2h^{\ast }X \\
&&\overset{\ref{AAB2}}{=}-(\mathcal{K}_{\xi }\phi )X+2h^{\ast }X.
\end{eqnarray*}%
So from above the last equations we get%
\begin{equation}
h^{\ast }X-hX=(\mathcal{K}_{\xi }\phi )X.  \label{R05}
\end{equation}%
On the other hand one can easily obtain that%
\begin{equation}
h^{\ast }X+hX=2h^{0}X.  \label{R06}
\end{equation}%
By (\ref{R05}) and (\ref{R06}) we are led to (\ref{R04}).
\end{proof}

\begin{corollary}
\label{KLM}Let ($M^{2n+1},\phi ,\xi ,\eta ,g)$ be an \textit{almost
cosymplectic statistical manifold.} Then $\mathcal{K}_{\xi }\varphi =0$ if
and only if $\mathcal{A\phi }=-\phi \mathcal{A}^{\ast }$ and $\mathcal{A}%
^{\ast }\phi =-\phi \mathcal{A}$ if and only if $\ \nabla _{\xi }\phi
=0=\nabla _{\xi }^{\ast }\phi $
\end{corollary}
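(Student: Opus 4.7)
The plan is to establish the triangle of equivalences $(1)\Leftrightarrow(3)\Leftrightarrow(2)$, where $(1)$ is $\mathcal{K}_{\xi}\phi=0$, $(2)$ is the pair $\mathcal{A}\phi=-\phi\mathcal{A}^{\ast}$ together with $\mathcal{A}^{\ast}\phi=-\phi\mathcal{A}$, and $(3)$ is $\nabla_{\xi}\phi=0=\nabla_{\xi}^{\ast}\phi$. The whole thing becomes short once the Levi-Civita identity $\nabla_{\xi}^{0}\phi=0$ is in hand, so my first step is to record that. Specializing items (v)-(vi) of Proposition \ref{Proposition AFI} to $\nabla=\nabla^{\ast}=\nabla^{0}$ (so $\mathcal{A}=\mathcal{A}^{\ast}=\mathcal{A}^{0}$) gives $(\nabla_{\xi}^{0}\phi)X=\phi\mathcal{A}^{0}X+\mathcal{A}^{0}\phi X$, which vanishes because of the standard almost cosymplectic relation $\mathcal{A}^{0}\phi+\phi\mathcal{A}^{0}=0$ (explicitly used inside the proof of Proposition \ref{Proposition AFI} (vii)).

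For $(1)\Leftrightarrow(3)$: I would substitute $X=\xi$ into the two identities (\ref{AAB1}) and (\ref{AAB2}) of Lemma \ref{CONSTAT}. Using $\nabla_{\xi}^{0}\phi=0$ these collapse to
\begin{equation*}
\nabla_{\xi}\phi=\mathcal{K}_{\xi}\phi,\qquad \nabla_{\xi}^{\ast}\phi=-\mathcal{K}_{\xi}\phi,
\end{equation*}
from which the equivalence is immediate in both directions.

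For $(2)\Leftrightarrow(3)$: this is a direct reading of items (v) and (vi) of Proposition \ref{Proposition AFI}. Item (v) yields $\nabla_{\xi}\phi=0 \Leftrightarrow \phi\mathcal{A}+\mathcal{A}^{\ast}\phi=0 \Leftrightarrow \mathcal{A}^{\ast}\phi=-\phi\mathcal{A}$, and item (vi) yields $\nabla_{\xi}^{\ast}\phi=0 \Leftrightarrow \mathcal{A}\phi=-\phi\mathcal{A}^{\ast}$. Together these are exactly condition $(2)$; moreover they are mutually equivalent via item (vii), so knowing either one of them suffices. The only non-bookkeeping observation throughout is the identity $\nabla_{\xi}^{0}\phi=0$, and even that is essentially quoted from the almost cosymplectic literature, so I expect no serious obstacle.
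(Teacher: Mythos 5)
Your proof is correct and follows essentially the route the paper intends: the corollary is stated without proof as a consequence of Proposition \ref{Proposition AFI} (items (v)--(vii)) and Lemma \ref{CONSTAT} specialized at $X=\xi$ (where $\nabla_{\xi}^{0}\phi=0$ gives $\nabla_{\xi}\phi=\mathcal{K}_{\xi}\phi$ and $\nabla_{\xi}^{\ast}\phi=-\mathcal{K}_{\xi}\phi$, exactly as in the proof of the preceding proposition on $h$, $h^{\ast}$, $h^{0}$). Your observation that item (vii) makes the two identities in the middle condition mutually equivalent is a correct and worthwhile point of bookkeeping.
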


By (\ref{AA2}), (\ref{R0}) and (\ref{R00}) we find following.

\begin{theorem}
\label{R}Let ($M^{2n+1},\phi ,\xi ,\eta ,g)$ be an be an \textit{almost
cosymplectic statistical manifold.} Then, for any $X,Y\in $ $\Gamma (TM),$%
\begin{eqnarray}
4R^{0}(X,Y)\xi &=&R(X,Y)\xi +R^{\ast }(X,Y)\xi  \label{b3} \\
&&+(\nabla _{Y}^{\ast }A)X-\,(\nabla _{X}^{\ast }A)Y+(\nabla _{Y}A^{\ast
})X-\,(\nabla _{X}A^{\ast })Y.  \notag
\end{eqnarray}
\end{theorem}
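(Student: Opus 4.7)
The plan is to treat the Levi-Civita side exactly as Theorem \ref{b30} treated $\nabla$ and $\nabla^{\ast}$, and then expand via the identities $2\mathcal{A}^{0}=\mathcal{A}+\mathcal{A}^{\ast}$ and $2\nabla^{0}=\nabla+\nabla^{\ast}$. First I would observe that the derivation of (\ref{R0}) used nothing about $\nabla$ beyond its being torsion-free together with the definition $\mathcal{A}X=-\nabla_{X}\xi$. Since $\nabla^{0}$ is torsion-free and $\mathcal{A}^{0}X=-\nabla_{X}^{0}\xi$, the very same computation yields
\begin{equation*}
R^{0}(X,Y)\xi=(\nabla_{X}^{0}\mathcal{A}^{0})Y-(\nabla_{Y}^{0}\mathcal{A}^{0})X.
\end{equation*}
This is the one non-trivial input the theorem needs beyond what has already been proved; I would state it as a preliminary identity.

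Next I would establish a tensorial Leibniz-type identity: for any $(1,1)$-tensor field $T$ on $M$,
\begin{equation*}
2(\nabla_{X}^{0}T)Y=(\nabla_{X}T)Y+(\nabla_{X}^{\ast}T)Y.
\end{equation*}
This follows by expanding $(\nabla_{X}^{0}T)Y=\nabla_{X}^{0}(TY)-T(\nabla_{X}^{0}Y)$, substituting $\nabla^{0}=\tfrac{1}{2}(\nabla+\nabla^{\ast})$ in both places, and regrouping via $\nabla_{X}(TY)=(\nabla_{X}T)Y+T(\nabla_{X}Y)$ and its $\nabla^{\ast}$-analogue.

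With these two tools in hand, I would apply the Leibniz identity to $T=\mathcal{A}^{0}$ and then insert $2\mathcal{A}^{0}=\mathcal{A}+\mathcal{A}^{\ast}$ from (\ref{AA2}), which gives
\begin{equation*}
4(\nabla_{X}^{0}\mathcal{A}^{0})Y=(\nabla_{X}\mathcal{A})Y+(\nabla_{X}\mathcal{A}^{\ast})Y+(\nabla_{X}^{\ast}\mathcal{A})Y+(\nabla_{X}^{\ast}\mathcal{A}^{\ast})Y,
\end{equation*}
and similarly with $X,Y$ interchanged. Substituting into the formula for $R^{0}(X,Y)\xi$ from the first step yields eight terms, which I would group into four skew-symmetric pairs: the pair in $\mathcal{A}$-only reassembles to $R(X,Y)\xi$ via (\ref{R0}), the pair in $\mathcal{A}^{\ast}$-only reassembles to $R^{\ast}(X,Y)\xi$ via (\ref{R00}), and the two mixed pairs $(\nabla\mathcal{A}^{\ast})$ and $(\nabla^{\ast}\mathcal{A})$ are precisely the remaining four terms appearing in (\ref{b3}).

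The main obstacle, and the only place where one must be careful, is the Leibniz-type identity for the difference $\nabla^{0}-\nabla$ acting on a $(1,1)$-tensor, since naively averaging connections does \emph{not} commute with covariant differentiation of higher-rank tensors. Once one verifies that averaging the vector-field formula $\nabla^{0}=\tfrac{1}{2}(\nabla+\nabla^{\ast})$ and unwrapping both sides through the product rule do produce the clean relation $2\nabla^{0}T=\nabla T+\nabla^{\ast}T$ for a $(1,1)$-tensor, the rest is bookkeeping and direct substitution of (\ref{R0}) and (\ref{R00}).
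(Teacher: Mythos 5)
Your strategy is exactly the one the paper (implicitly) follows --- it offers no written proof beyond ``By (\ref{AA2}), (\ref{R0}) and (\ref{R00})'' --- and your two auxiliary identities are both correct: the $\nabla^{0}$-analogue $R^{0}(X,Y)\xi=(\nabla_{X}^{0}\mathcal{A}^{0})Y-(\nabla_{Y}^{0}\mathcal{A}^{0})X$ does follow from torsion-freeness alone, and the relation $2(\nabla_{X}^{0}T)Y=(\nabla_{X}T)Y+(\nabla_{X}^{\ast}T)Y$ for a $(1,1)$-tensor $T$ is a clean consequence of $2\nabla^{0}=\nabla+\nabla^{\ast}$ and the product rule, as you verify.

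The problem is the last line of bookkeeping. Carrying out your substitution gives
\begin{equation*}
4R^{0}(X,Y)\xi=R(X,Y)\xi+R^{\ast}(X,Y)\xi+(\nabla_{X}\mathcal{A}^{\ast})Y-(\nabla_{Y}\mathcal{A}^{\ast})X+(\nabla_{X}^{\ast}\mathcal{A})Y-(\nabla_{Y}^{\ast}\mathcal{A})X,
\end{equation*}
whose mixed terms are the \emph{negatives} of the four terms displayed in (\ref{b3}); they are not ``precisely the remaining four terms'' as you assert, and nothing forces $(\nabla_{X}\mathcal{A}^{\ast})Y+(\nabla_{X}^{\ast}\mathcal{A})Y$ to be symmetric in $X,Y$, so the discrepancy cannot be waved away. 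The source of the trouble is a sign inconsistency in the paper itself: with the standard convention $R(X,Y)Z=\nabla_{X}\nabla_{Y}Z-\nabla_{Y}\nabla_{X}Z-\nabla_{[X,Y]}Z$ and $\mathcal{A}X=-\nabla_{X}\xi$ one actually gets $R(X,Y)\xi=(\nabla_{Y}\mathcal{A})X-(\nabla_{X}\mathcal{A})Y$, i.e.\ (\ref{R0}) and (\ref{R00}) carry the opposite sign to what Theorem \ref{b30} prints, and with the corrected versions your computation reproduces (\ref{b3}) exactly as stated. As written, however, your proof takes (\ref{R0})--(\ref{R00}) at face value and therefore proves the identity with the mixed terms reversed; you need either to flag and fix the sign in (\ref{R0})--(\ref{R00}) (deriving them yourself under a declared curvature convention) or to state the conclusion you actually obtain.
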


For an almost cosymplectic manifold, \ we can give well known equality (see 
\cite{HAKAN}): 
\begin{equation}
R^{0}(X,\xi )\xi -\phi R^{0}(\phi X,\xi )\xi =-2(\mathcal{A}^{0})^{2}.
\label{b4}
\end{equation}

\begin{theorem}
\label{R2}Let ($M^{2n+1},\phi ,\xi ,\eta ,g)$ be an \textit{almost
cosymplectic statistical manifold.} We assume that $\mathcal{K}_{\xi
}\varphi =0$ and $\mathcal{A}\xi =0.$Then, for any $X\in $ $\Gamma (TM),$ we
have%
\begin{equation}
R(X,\xi )\xi -\phi R(\phi X,\xi )\xi +R^{\ast }(X,\xi )\xi -\phi R^{\ast
}(\phi X,\xi )\xi )=-2(\mathcal{A}^{2}+(\mathcal{A}^{\ast })^{2})X,
\label{RZETAZETA}
\end{equation}%
\begin{equation}
S(\xi ,\xi )+S^{\ast }(\xi ,\xi )=-tr(A^{2}+(A^{\ast })^{2}).
\label{SZETAZETA}
\end{equation}
\end{theorem}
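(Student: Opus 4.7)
The plan for (\ref{RZETAZETA}) is to combine identity (\ref{b3}) with the Riemannian formula (\ref{b4}). Specialize (\ref{b3}) to $Y=\xi$: since $\mathcal{A}\xi=0$ also forces $\mathcal{A}^{\ast }\xi=0$ by Remark~\ref{AKSI}, the boundary terms collapse to $(\nabla_X^{\ast }\mathcal{A})\xi=\mathcal{A}\mathcal{A}^{\ast }X$ and $(\nabla_X\mathcal{A}^{\ast })\xi=\mathcal{A}^{\ast }\mathcal{A}X$. Replacing $X$ by $\phi X$, applying $\phi$, and subtracting, the left-hand side becomes $-8(\mathcal{A}^0)^2X=-2(\mathcal{A}+\mathcal{A}^{\ast })^2X$ by (\ref{b4}).

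The remaining work is to simplify the right-hand side using the hypothesis $\mathcal{K}_\xi\phi=0$, which by Corollary~\ref{KLM} yields the commutation relations $\mathcal{A}\phi=-\phi\mathcal{A}^{\ast }$, $\mathcal{A}^{\ast }\phi=-\phi\mathcal{A}$ and $\nabla_\xi\phi=\nabla_\xi^{\ast }\phi=0$. Together with $\phi^2=-I+\eta\otimes\xi$ and $\mathcal{A}\xi=\mathcal{A}^{\ast }\xi=0$ (which annihilates the $\eta$-part), these give the operator identities $\phi\mathcal{A}\mathcal{A}^{\ast }\phi=-\mathcal{A}^{\ast }\mathcal{A}$ and $\phi\mathcal{A}^{\ast }\mathcal{A}\phi=-\mathcal{A}\mathcal{A}^{\ast }$, producing the combined algebraic contribution $-2(\mathcal{A}\mathcal{A}^{\ast }+\mathcal{A}^{\ast }\mathcal{A})X$ after the subtraction. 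For the derivative pieces, direct manipulation using $\nabla_\xi\phi=0$ and $\mathcal{A}\phi=-\phi\mathcal{A}^{\ast }$ gives $\phi(\nabla_\xi^{\ast }\mathcal{A})\phi X=(\nabla_\xi^{\ast }\mathcal{A}^{\ast })X$ and $\phi(\nabla_\xi\mathcal{A}^{\ast })\phi X=(\nabla_\xi\mathcal{A})X$, where the $\eta(\cdot)\xi$-contribution of $\phi^2$ disappears because the duality $\xi\,g(\xi,\mathcal{A}^{\ast }X)=g(\nabla_\xi\xi,\mathcal{A}^{\ast }X)+g(\xi,\nabla_\xi^{\ast }(\mathcal{A}^{\ast }X))$ combined with $\mathcal{A}\xi=\mathcal{A}^{\ast }\xi=0$ forces $g(\nabla_\xi^{\ast }(\mathcal{A}^{\ast }X),\xi)=0$. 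Consequently the derivative contribution reduces to $[(\nabla_\xi^{\ast }\mathcal{A})-(\nabla_\xi^{\ast }\mathcal{A}^{\ast })]X+[(\nabla_\xi\mathcal{A}^{\ast })-(\nabla_\xi\mathcal{A})]X$. The key identity $\mathcal{K}_\xi=(\mathcal{A}^{\ast }-\mathcal{A})/2$, obtained by specializing (\ref{K4}) to $Y=\xi$ and invoking the symmetry $\mathcal{K}_XY=\mathcal{K}_YX$ in (\ref{K2}), rewrites this as $2[(\nabla_\xi\mathcal{K})-(\nabla_\xi^{\ast }\mathcal{K})](\xi,X)$, which vanishes by a short tensor-calculus check using $\mathcal{K}_\xi\xi=0$. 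Combining the pieces and using $-2(\mathcal{A}+\mathcal{A}^{\ast })^2+2(\mathcal{A}\mathcal{A}^{\ast }+\mathcal{A}^{\ast }\mathcal{A})=-2(\mathcal{A}^2+(\mathcal{A}^{\ast })^2)$ yields (\ref{RZETAZETA}).

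For (\ref{SZETAZETA}), take the trace of (\ref{RZETAZETA}). The non-obvious trace is $tr[X\mapsto\phi R(\phi X,\xi)\xi]$. In an orthonormal frame $\{\xi,e_1,\ldots,e_{2n}\}$ adapted to $\eta$, the antisymmetry $g(\phi U,V)=-g(U,\phi V)$ transforms this into $-\sum_{i=1}^{2n}g(R(\phi e_i,\xi)\xi,\phi e_i)$. Since $\phi$ is an isometry of $\ker\eta$, the collection $\{\phi e_i\}$ is again an orthonormal basis of $\ker\eta$, so the sum equals $S(\xi,\xi)$ and the trace equals $-S(\xi,\xi)$; the analogous statement holds for $R^{\ast }$. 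Tracing (\ref{RZETAZETA}) then produces $2S(\xi,\xi)+2S^{\ast }(\xi,\xi)=-2\,tr(\mathcal{A}^2+(\mathcal{A}^{\ast })^2)$, which is (\ref{SZETAZETA}). The main obstacle is the careful bookkeeping of the derivative terms in the first part: it hinges on exactly the interplay between $\mathcal{K}_\xi=(\mathcal{A}^{\ast }-\mathcal{A})/2$ and the various $\phi$-commutation relations, arranged so that every derivative term cancels in pairs.
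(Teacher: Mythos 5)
Your proof is correct and follows essentially the same route as the paper's: specialize (\ref{b3}) at $Y=\xi$ (using $\mathcal{A}\xi=0=\mathcal{A}^{\ast}\xi$), apply the $\phi(\cdot)\phi$ operation and subtract, invoke (\ref{b4}) together with $2\mathcal{A}^{0}=\mathcal{A}+\mathcal{A}^{\ast}$, convert the derivative terms with the commutation relations of Corollary \ref{KLM}, cancel them via $\mathcal{K}_{\xi}=\tfrac{1}{2}(\mathcal{A}^{\ast}-\mathcal{A})$ and $\mathcal{K}_{\xi}\xi=0$, and finish with a $\phi$-basis trace. If anything, you make the final cancellation of the $\nabla_{\xi}\mathcal{A}$-terms and the vanishing of the $\eta(\cdot)\xi$-components more explicit than the paper does.
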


\begin{proof}
If we replace $Y$ by $\xi $ in (\ref{b3}) and recall Remark \ref{AKSI} we
have 
\begin{eqnarray}
4R^{0}(X,\xi )\xi  &=&R(X,\xi )\xi +R^{\ast }(X,\xi )\xi   \notag \\
&&+(\nabla _{\xi }^{\ast }A)X+\,(\nabla _{\xi }A^{\ast })X  \notag \\
&&+\mathcal{A}\nabla _{X}^{\ast }\xi +\mathcal{A}^{\ast }\nabla _{X}\xi  
\notag \\
&=&R(X,\xi )\xi +R^{\ast }(X,\xi )\xi   \label{CM1} \\
&&+(\nabla _{\xi }^{\ast }A)X+\,(\nabla _{\xi }A^{\ast })X  \notag \\
&&-(AA^{\ast }+A^{\ast }A)X.  \notag
\end{eqnarray}%
Replacing $X$ by $\phi X$ in (\ref{CM1}) and then applying the tensor field $%
\phi $ both sides of the obtained equation and recalling that $\mathcal{A}%
\xi =0\overset{\text{Remark}\ref{AKSI}}{=}\mathcal{A}^{\ast }\xi $ we
readily find%
\begin{eqnarray}
4\phi R^{0}(\phi X,\xi )\xi  &=&\phi R(\phi X,\xi )\xi +\phi R^{\ast }(\phi
X,\xi )\xi   \notag \\
&&+\phi (\nabla _{\xi }^{\ast }A)\phi X+\phi \,(\nabla _{\xi }A^{\ast })\phi
X  \label{CM2} \\
&&+(AA^{\ast }+A^{\ast }A)X.  \notag
\end{eqnarray}%
Substracting (\ref{CM1}) from (\ref{CM2}) \ we get 
\begin{eqnarray}
4(R^{0}(X,\xi )\xi -\phi R^{0}(\phi X,\xi )\xi ) &=&R(X,\xi )\xi -\phi
R(\phi X,\xi )\xi +R^{\ast }(X,\xi )\xi -\phi R^{\ast }(\phi X,\xi )\xi  
\notag \\
&&+(\nabla _{\xi }^{\ast }A)X+\,(\nabla _{\xi }A^{\ast })X-\phi (\nabla
_{\xi }^{\ast }A)\phi X-\phi \,(\nabla _{\xi }A^{\ast })\phi X  \label{CM3}
\\
&&-2(AA^{\ast }+A^{\ast }A)X.  \notag
\end{eqnarray}%
Other hand, \ using (\ref{AA2}) and (\ref{b4}) we conclude that 
\begin{equation}
-2(AA^{\ast }+A^{\ast }A)=4((R^{0}(X,\xi )\xi -\phi R^{0}(\phi X,\xi )\xi )+2%
\mathcal{A}^{2}+2(\mathcal{A}^{\ast })^{2}.  \label{CMa4}
\end{equation}%
Using (\ref{CMa4}) in (\ref{CM3}) we have 
\begin{eqnarray}
0 &=&R(X,\xi )\xi -\phi R(\phi X,\xi )\xi +R^{\ast }(X,\xi )\xi -\phi
R^{\ast }(\phi X,\xi )\xi   \notag \\
&&+(\nabla _{\xi }^{\ast }\mathcal{A})X+\,(\nabla _{\xi }^{\ast }\mathcal{A}%
)X-\phi (\nabla _{\xi }^{\ast }\mathcal{A})\phi X-\phi \,(\nabla _{\xi
}^{\ast }\mathcal{A})\phi X  \label{CM5} \\
&&+2(\mathcal{A}^{2}+(\mathcal{A}^{\ast })^{2}).  \notag
\end{eqnarray}%
Beacause of Corollary \ref{KLM} we have 
\begin{equation}
\phi (\nabla _{\xi }^{\ast }\mathcal{A})\phi X+\phi \,(\nabla _{\xi }^{\ast }%
\mathcal{A})\phi X=(\nabla _{\xi }\mathcal{A})X+\,(\nabla _{\xi }^{\ast }%
\mathcal{A}^{\ast })X-g(\xi ,\nabla _{\xi }\mathcal{A}+\nabla _{\xi }^{\ast }%
\mathcal{A}^{\ast }X)\xi .  \label{CM6}
\end{equation}%
A short calculation leads to%
\begin{equation}
g(\xi ,\nabla _{\xi }\mathcal{A}+\nabla _{\xi }^{\ast }\mathcal{A}^{\ast
}X)=0.  \label{CM7}
\end{equation}%
We finally find 
\begin{equation}
\phi (\nabla _{\xi }^{\ast }\mathcal{A})\phi X+\phi \,(\nabla _{\xi }%
\mathcal{A}^{\ast })\phi X=(\nabla _{\xi }\mathcal{A})X+\,(\nabla _{\xi
}^{\ast }\mathcal{A}^{\ast }X)X.  \label{CM8}
\end{equation}%
Combining (\ref{CM5}) with (\ref{CM8}) we get%
\begin{eqnarray}
0 &=&R(X,\xi )\xi -\phi R(\phi X,\xi )\xi +R^{\ast }(X,\xi )\xi -\phi
R^{\ast }(\phi X,\xi )\xi   \label{CM9} \\
&&+(\nabla _{\xi }^{\ast }(\mathcal{A}X-\mathcal{A}^{\ast }))X-\,(\nabla
_{\xi }(\mathcal{A}-\mathcal{A}^{\ast }))X  \notag \\
&&+2(\mathcal{A}^{2}+(\mathcal{A}^{\ast })^{2}).  \notag
\end{eqnarray}%
Using the equality $A-A^{\ast }=2\mathcal{K}_{\xi }$ in (\ref{CM9}) we
obtain (\ref{RZETAZETA}).

Taking into account $\phi $-basis and (\ref{RZETAZETA}), we readily find 
\begin{equation*}
S(\xi ,\xi )+S^{\ast }(\xi ,\xi )=-tr(\mathcal{A}^{2}+(\mathcal{A}^{\ast
})^{2}).
\end{equation*}
\end{proof}

\end{document}